\numberwithin{equation}{section}
\numberwithin{figure}{section}
\theoremstyle{plain}
\newtheorem{thm}{\protect\theoremname}[section]
\theoremstyle{definition}
\newtheorem{defn}[thm]{\protect\definitionname}
\theoremstyle{plain}
\newtheorem{lem}[thm]{\protect\lemmaname}
\theoremstyle{plain}
\newtheorem{cor}[thm]{\protect\corollaryname}
\theoremstyle{plain}
\newtheorem{prop}[thm]{\protect\propositionname}
\theoremstyle{remark}
\newtheorem{rem}[thm]{\protect\remarkname}
\providecommand{\corollaryname}{Corollary}
\providecommand{\definitionname}{Definition}
\providecommand{\lemmaname}{Lemma}
\providecommand{\propositionname}{Proposition}
\providecommand{\remarkname}{Remark}
\providecommand{\theoremname}{Theorem}
\begin{document}
\title{Representations of $\SL_{n}$ over finite local rings of length two}
\author{Alexander Stasinski}
\address{Alexander Stasinski, Department of Mathematical Sciences, Durham University,
Durham, DH1 3LE, UK}
\email{alexander.stasinski@durham.ac.uk}
\begin{abstract}
Let $\F_{q}$ be a finite field of characteristic $p$ and let $W_{2}(\F_{q})$
be the ring of Witt vectors of length two over $\F_{q}$. We prove
that for any integer $n$ such that $p$ divides $n$, the groups
$\SL_{n}(\F_{q}[t]/t^{2})$ and $\SL_{n}(W_{2}(\F_{q}))$ have the
same number of irreducible representations of dimension $d$, for
each $d$.
\end{abstract}

\maketitle

\section{Introduction\label{sec:Introduction}}

Let $\cO$ be a complete discrete valuation ring with maximal ideal
$\mfp$ and residue field $\F_{q}$ with $q$ elements and characteristic
$p$. For an integer $r\geq1$, we write $\cO_{r}=\cO/\mfp^{r}$.
It is known that $\cO_{2}$, and in fact any finite local ring of
length two with residue field $\F_{q}$, is isomorphic to either the
ring $W_{2}(\F_{q})$ of Witt vectors of length two or $\F_{q}[t]/t^{2}$
(see \cite[Lemma~2.1]{Stasinski-Vera-Gajardo}). For a finite group
$G$ and an integer $d\geq1$, let $\Irr_{d}(G)$ denote the set of
isomorphism classes of irreducible complex representations of $G$
of dimension~$d$.

P.~Singla \cite{Pooja-classicalgrps} has proved that when $p$ does
not divide $n$, we have

\[
\#\Irr_{d}(\SL_{n}(\F_{q}[t]/t^{2}))=\#\Irr_{d}(\SL_{n}(W_{2}(\F_{q}))),
\]
for all $d\geq1$. In \cite{Stasinski-Vera-Gajardo} a new proof of
this was given, as well as a generalisation when $\SL_{n}$ is replaced
by any reductive group scheme $\G$ over $\Z$ (with connected fibres)
such that $p$ is very good for $\G\times_{\Z}\F_{q}$.

The case $\G=\SL_{n}$ with $p\mid n$ was also studied in \cite{Pooja-classicalgrps}
but the argument there remains incomplete (see \cite[Section~5]{Stasinski-Vera-Gajardo}).
In the present paper, we complete the argument and prove that for
all $n$ such that $p\mid n$, we have
\[
\#\Irr_{d}(\SL_{n}(\F_{q}[t]/t^{2}))=\#\Irr_{d}(\SL_{n}(W_{2}(\F_{q}))),
\]
for all $d\geq1$. This is Theorem~\ref{thm:Main}, whose proof is
finished in Section~\ref{sec:The-main-result}.

The main new ingredient in the proof is the following. Let $s:\M_{n}(\F_{q})\rightarrow\M_{n}(\cO_{2})$
be the function induced by the multiplicative section $s:\F_{q}^{\times}\rightarrow\cO_{2}^{\times}$.
We show (see Theorem~\ref{thm:Thm1}) that for any $x\in\M_{n}(\F_{q})$
in \emph{Weyr normal form} the reduction mod $\mfp$ map on centralisers
\[
C_{\SL_{n}(\cO_{2})}(s(x))\longrightarrow C_{\SL_{n}(\F_{q})}(x)
\]
is surjective. The Weyr normal form of a matrix is a kind of dual
of the more common Jordan normal form, but has the advantage that
centralisers are block upper triangular (see Section~\ref{sec:Surjectivity-of-the}).
By contrast, we do not know how to prove a statement like this when
`Weyr' is replaced by `Jordan'.

Another important ingredient of the proof of Theorem~\ref{thm:Main}
is Lemma~\ref{lem:V-complement} which gives the precise structure
of the quotient $C_{\SL_{n}(\F_{q})}(x+Z)/C_{\SL_{n}(\F_{q})}(x)$,
where $Z$ is the centre of $\M_{n}(\F_{q})$ and $\SL_{n}(\F_{q})$
acts by conjugation on $\M_{n}(\F_{q})/Z$. It turns out that this
quotient is cyclic, and is generated by a coset $vC_{\SL_{n}(\F_{q})}(x)$,
where $v$ is a permutation matrix.

Of course, Theorem~\ref{thm:Main} is trivially true whenever $\SL_{n}(\F_{q}[t]/t^{2})\cong\SL_{n}(W_{2}(\F_{q}))$.
However, this is almost never the case, as shown by Sah \cite{Sah-I},
namely, when $p=q$, the groups are isomorphic only for $(n,p)\in\{(2,3),(3,2)\}$.
In Section~\ref{sec:The-isomorphism-problem} we give a new proof,
following a suggestion of Y.~de~Cornulier, that the groups $\SL_{n}(\F_{q}[t]/t^{2})$
and $\SL_{n}(W_{2}(\F_{q}))$ are not isomorphic when $p\geq5$.

\section{\label{sec:Notational-preliminaries}Notational preliminaries}

Define the groups
\begin{alignat*}{2}
G_{2} & =\GL_{n}(\cO_{2}),\qquad & G & =\GL_{n}(\F_{q}),\\
S_{2} & =\SL_{n}(\cO_{2}),\qquad & S & =\SL_{n}(\F_{q}).
\end{alignat*}
The reduction map $\cO_{2}\rightarrow\F_{q}$ induces surjective homomorphisms
$\rho:G_{2}\rightarrow G$ and $\rho:S_{2}\rightarrow S$ (the surjectivity
follows either from smoothness of the group schemes $\GL_{n}$ and
$\SL_{n}$ or by noting that a set of generators of $G$ or $S$ can
be lifted to $G_{2}$ or $S_{2}$, respectively). We let $G^{1}$
and $S^{1}$ denote the kernels of the homomorphisms $\rho$, respectively.
Similarly, for any matrix $x\in\M_{n}(\cO_{2})$ we will denote its
image in $\M_{n}(\F_{q})$ by $\rho(x)$.

Let $\M_{n}(\F_{q})=\Lie(\GL_{n})(\F_{q})$ be the ring of $n\times n$
matrices over $\F_{q}$ and let $\M_{n}^{0}(\F_{q})=\Lie(\SL_{n})(\F_{q})=\{x\in\M_{n}(\F_{q})\mid\Tr(x)=0\}$.
Choosing a prime element $\varpi\in\cO_{2}$, the map $1+\varpi x\mapsto\rho(x)$
induces isomorphisms $G^{1}\cong\M_{n}(\F_{q})$ and $S^{1}\cong\M_{n}^{0}(\F_{q})$.
Consider the $G$-equivariant map
\begin{align*}
\M_{n}(\F_{q}) & \longrightarrow\M_{n}^{0}(\F_{q})^{*}:=\Hom_{\F_{q}}(\M_{n}^{0}(\F_{q}),\F_{q}),\\
x\longmapsto f_{x} & \text{ where }f_{x}(y)=\Tr(xy),\quad\text{for }y\in\M_{n}^{0}(\F_{q})
\end{align*}
It is easy to see that the kernel of this map is the subalgebra $Z$
of scalar matrices. Since $\dim\M_{n}(\F_{q})=n^{2}$ and $\dim\M_{n}^{0}(\F_{q})^{*}=\dim\M_{n}^{0}(\F_{q})=n^{2}-1$,
this map is surjective, so we have a $G$-equivariant isomorphism
\[
\M_{n}(\F_{q})/Z\longiso\M_{n}^{0}(\F_{q})^{*}.
\]
Using this isomorphism, we will identify elements in $\M_{n}^{0}(\F_{q})^{*}$
with elements in $\M_{n}(\F_{q})/Z$.

For $x\in\M_{n}(\F_{q})$, let $\beta\in\M_{n}^{0}(\F_{q})^{*}$ be
the element corresponding to $x+Z$. As in \cite[Section~4.1]{Stasinski-Vera-Gajardo},
we then have a degree one character $\psi_{x+Z}=\psi_{\beta}\in\Irr(S^{1})$.
More precisely, for $\hat{y}\in\M_{n}(\cO_{2})$ with $\tr(\hat{y})=0$
and such that $\rho(\hat{y})=y$, we have $1+\varpi\hat{y}\in S^{1}$
and
\[
\psi_{x+Z}(1+\varpi\hat{y})=\psi(f_{x}(y))=\psi(\Tr(xy)).
\]
Similarly, we have an extension $\psi_{x}\in\Irr(G^{1})$ of $\psi_{x+Z}$,
given by the same formula but for $\hat{y}\in\M_{n}(\cO_{2})$. The
map $x+Z\mapsto\psi_{x+Z}$ induces a $G$-equivariant isomorphism
of abelian groups
\[
\M_{n}(\F_{q})/Z\longiso\Irr(S^{1}).
\]
Note that since $S_{2}$ is normal in $G_{2}$, the action of $S_{2}$
on $\Irr(S^{1})$ (which factors through $S$) extends to an action
of $G_{2}$ (which factors through $G$).

\section{\label{sec:Surjectivity-of-the}Surjectivity of the map on centralisers}

Let $n\geq2$ be a fixed integer. Let $s:\M_{n}(\F_{q})\rightarrow\M_{n}(\cO_{2})$
be the function induced by the multiplicative section $s:\F_{q}^{\times}\rightarrow\cO_{2}$. 

In this section, we prove one of the key results of the present paper,
namely:
\begin{thm}
\label{thm:Thm1}Assume that $x\in\M_{n}(\F_{q})$ is in Weyr normal
form. Then the map
\[
\rho:C_{S_{2}}(s(x))\longrightarrow C_{S}(x)
\]
 is surjective.
\end{thm}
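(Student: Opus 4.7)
The plan is to exploit the block upper-triangular structure of the Weyr centraliser to construct the lift explicitly. The first step is to reduce to a single eigenvalue: writing $x = \bigoplus_{\lambda} x_{\lambda}$ with $x_{\lambda} = \lambda I + N_{\lambda}$ and each $N_{\lambda}$ a Weyr nilpotent, the fact that $s(\lambda) - s(\mu) \in \cO_{2}^{\times}$ for $\lambda \neq \mu$ in $\F_{q}$, combined with the usual ``nilpotent plus unit is invertible'' argument applied to the operator $A \mapsto AN_{\mu} - N_{\lambda}A - (s(\lambda)-s(\mu))A$ on off-diagonal blocks, yields
\[
C_{\M_{n}(\cO_{2})}(s(x)) = \bigoplus_{\lambda} C_{\M_{n_{\lambda}}(\cO_{2})}(s(x_{\lambda})),
\]
parallel to the $\F_{q}$-decomposition. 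It therefore suffices to handle a single Weyr nilpotent $N$ with partition $n_{1} \geq \cdots \geq n_{r}$, with the determinant constraint $\prod_{\lambda} \det(g_{\lambda}) = 1$ restored at the end.

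The centraliser $C_{\M_{n}(\F_{q})}(N)$ consists of block upper-triangular matrices $A = (A_{ij})_{1 \leq i \leq j \leq r}$ (with $A_{ij}$ of size $n_{i} \times n_{j}$) satisfying relations of the form ``the top $n_{i+1} \times n_{j}$ submatrix of $A_{i,j-1}$ equals $A_{i+1,j}$, and its bottom $(n_{i}-n_{i+1}) \times n_{j}$ submatrix is zero''. All coefficients in these relations lie in $\{0,1\}$, so the same parameterisation realises $C_{\M_{n}(\cO_{2})}(s(N))$ as a free $\cO_{2}$-module of the same rank. Unwinding the main-diagonal relations, each $A_{ii}$ becomes nested block upper-triangular with ``fundamental'' diagonal blocks $A_{rr}$ (of size $n_{r}$) and $T_{k}$ (of size $n_{k} - n_{k+1}$, for $i \leq k \leq r-1$), where $A_{rr}$ and the $T_{k}$ are independent of $i$. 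A direct computation gives
\[
\det(A) = \det(A_{rr})^{r} \prod_{k=1}^{r-1} \det(T_{k})^{k}.
\]

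For $g \in C_{S}(x_{\lambda})$, I would lift each fundamental diagonal block $D \in \{A_{rr}, T_{r-1}, \ldots, T_{1}\}$ to some $\widetilde{D} \in \GL(\cO_{2})$ with $\det(\widetilde{D}) = s(\det D)$; this is always possible because any lift $\widetilde{D}^{(0)}$ has $\det(\widetilde{D}^{(0)}) \in s(\det D)(1+\varpi\cO_{2})$, and right-multiplying by $\mathrm{diag}(1-\varpi\delta, 1, \ldots, 1)$ for an appropriate $\delta \in \F_{q}$ cancels the unipotent factor. The remaining free parameters (off-diagonal entries) are lifted by any set-theoretic section. Assembling produces $\tilde{g}_{\lambda} \in C_{G_{2}}(s(x_{\lambda}))$ with
\[
\det(\tilde{g}_{\lambda}) = s(\det A_{rr})^{r} \prod_{k=1}^{r-1} s(\det T_{k})^{k} = s(\det g_{\lambda})
\]
by multiplicativity of $s$. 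Gluing over $\lambda$ and using $\prod_{\lambda} \det(g_{\lambda}) = 1$ gives $\det(\tilde{g}) = s(1) = 1$, so $\tilde{g} \in C_{S_{2}}(s(x))$.

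The main obstacle, and the reason for using Weyr rather than Jordan form, is in the structural step: in Weyr form the centraliser is block upper-triangular, which makes $\det(A)$ a monomial in the determinants of independent fundamental diagonal blocks. This is precisely what allows multiplicativity of $s$ to collapse the determinant to $s(\det g_{\lambda})$. The Jordan centraliser admits no analogous block upper-triangular description, so $\det$ would become a sum of products and the non-additivity of $s$ would produce error terms whose cancellation is not evident.
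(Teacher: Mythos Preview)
Your argument is correct and takes a more elementary route than the paper's. Both proofs share the same opening moves: reduce to a single eigenvalue via the block decomposition of centralisers (the paper's Lemma~\ref{lem:C-decomp-in-blocks}), and describe the Weyr centraliser as block upper-triangular with a ``semisimple'' part that is a product of general linear factors together with a strictly-above-diagonal part that lifts for free (Lemma~\ref{lem:Weyr-centraliser}). The divergence is in how the determinant-one condition on the semisimple part is lifted. The paper packages this as surjectivity of $X_{\lambda}(\cO_{2})\to X_{\lambda}(\F_{q})$ for the hypersurface $X_{\lambda}$ cut out by $\prod_{\ell}\det^{e_{\ell}}=1$, proved by dividing out the common $p$-power in the exponents to obtain a smooth scheme $X_{\mu}$ with the same $\F_{q}$-points and then invoking the infinitesimal lifting criterion (Lemmas~\ref{lem:smooth-X_lambda} and~\ref{lem:X_lambda-surjective}). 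You instead lift each fundamental diagonal block $D$ so that $\det\widetilde{D}=s(\det D)$ exactly; multiplicativity of the Teichm\"uller section then collapses the block upper-triangular determinant to $s(\det g_{\lambda})$ and, after gluing, to $s(1)=1$. Your route avoids any scheme theory and is shorter; the paper's argument isolates precisely which exponent configurations obstruct smoothness and shows the obstruction is invisible on $\F_{q}$-points, which is perhaps more portable to settings lacking a convenient multiplicative section.
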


The Weyr normal form of a matrix is a lesser known dual form of the
Jordan normal form. We will briefly introduce the Weyr normal form
following \cite{Weyr-form-book}, where much further information about
it can be found. While the Jordan normal form is a direct sum of Jordan
blocks (called basic Jordan matrices in \cite{Weyr-form-book}), possibly
with the same eigenvalues, the Weyr normal form is a direct sum of
``basic Weyr matrices'' with \emph{distinct eigenvalues}. The following
definition is a paraphrasing of \cite[Definition~2.1.1]{Weyr-form-book}.
\begin{defn}
Let $A$ be a commutative ring with identity. A matrix $W\in\M_{n}(A)$
is called a \emph{basic Weyr matrix} (with eigenvalue $\lambda\in A$)
if it is of the following form: there is a partition $n=n_{1}+\dots+n_{r}$
with $n_{1}\geq\dots\geq n_{r}\geq1$ such that, when $W$ is viewed
as an $r\times r$ blocked matrix $(W_{ij})$, where the $(i,j)$
block $W_{ij}$ is an $n_{i}\times n_{j}$ matrix, the following three
conditions hold:
\begin{enumerate}
\item $W_{ii}=\lambda I_{n_{i}}$, for each $i=1,\dots,r$ (where $I_{n_{i}}$
is the identity matrix of size $n_{i}$);
\item the superdiagonal blocks $W_{i,i+1}$ are full column rank $n_{i}\times n_{i+1}$
matrices in reduced row-echelon form (i.e., an identity matrix followed
by zero rows) for $i=1,\dots,r-1$;
\item all other blocks of $W$ are zero (i.e., $W_{ij}=0$ when $j\notin\{i,i+1\}$).
\end{enumerate}
\end{defn}

An example of a nilpotent basic Weyr matrix, corresponding to the
partition $7=3+2+2$, is
\[
\left[\begin{array}{ccc|cc|cc}
0 & 0 & 0 & 1 & 0\\
0 & 0 & 0 & 0 & 1\\
0 & 0 & 0 & 0 & 0\\
\hline  &  &  & 0 & 0 & 1 & 0\\
 &  &  & 0 & 0 & 0 & 1\\
\hline  &  &  &  &  & 0 & 0\\
 &  &  &  &  & 0 & 0
\end{array}\right],
\]
where omitted entries are zero. The Jordan normal form of this matrix
is given by Jordan blocks corresponding to the dual partition $7=3+3+1$.
This duality between the Jordan and Weyr forms is a general phenomenon
(see \cite{Weyr-form-book}).

A matrix in $\M_{n}(A)$, $A$ a commutative ring with identity, is
said to be a \emph{Weyr matrix} or is in \emph{Weyr (normal) form}
if it is a direct sum of basic Weyr matrices with pairwise distinct
eigenvalues (see \cite[Definition~2.1.5]{Weyr-form-book}). Note that
a Weyr matrix cannot have more than one basic Weyr block for a given
eigenvalue. This is in contrast to the Jordan normal form, where multiple
basic Jordan blocks with the same eigenvalue can appear. In particular,
the direct sum of two nilpotent basic Weyr matrices is not a matrix
in Weyr normal form.

Just as for the Jordan normal form, the Weyr form of a matrix $x\in\M_{n}(F)$
over a field $F$ such that the eigenvalues of $x$ lie in $F$, exists
and is unique (up to the order of the blocks. This implies that there
exists a finite field extension $L/F$ containing the eigenvalues
of $x$ and a $g\in\GL_{n}(L)$ such that $gxg^{-1}\in\M_{n}(L)$
is in Weyr normal form.

In contrast to the Jordan normal form, the Weyr form has the advantage
that the centraliser is block upper triangular. It is this fact about
centralisers of matrices in Weyr form which allows us to prove Theorem~\ref{thm:Thm1}.
It is possible that Theorem~\ref{thm:Thm1} holds also when ``Weyr''
is replaced by ``Jordan'', but we do not know how to prove this.

\subsection{Centralisers}

If $A$ is a ring, $n=n_{1}+\dots+n_{r}$ and $x_{i}\in\M_{n_{i}}(A)$,
we will write $x_{1}\oplus\dots\oplus x_{r}$ for the block diagonal
matrix $\diag(x_{1},\dots,x_{r})\in\M_{n}(A)$.

The following result, well-known in the case of fields, allows us
to reduce to the case of centralisers of matrices with a single eigenvalue.
The result holds, with essentially the same proof, over any principal
ideal local ring, but for notational simplicity we only state it over
$\cO_{2}$ (since we don't loose any generality, it then also holds
over $\tilde{\cO}_{2}$, defined with respect to any finite extension
$k/\F_{q}$).
\begin{lem}
\label{lem:C-decomp-in-blocks}Let $x=x_{1}\oplus\dots\oplus x_{r}\in\M_{n}(\cO_{2})$,
for $x_{i}\in\M_{n_{i}}(\cO_{2})$, such that all the eigenvalues
of $\rho(x)$ lie in the residue field $\F_{q}$, each $\rho(x_{i})$
has a single eigenvalue $\lambda_{i}$ in $\F_{q}$ and such that
$\lambda_{i}\neq\lambda_{j}$ when $i\neq j$. Then, as $\cO_{2}$-algebras,
\[
C_{\M_{n}(\cO_{2})}(x)\cong C_{\M_{n_{1}}(\cO_{2})}(x_{1})\times\dots\times C_{\M_{n_{r}}(\cO_{2})}(x_{r}),
\]
where the isomorphism, from right to left, is given by $(B_{1},\dots,B_{r})\mapsto B_{1}\oplus\dots\oplus B_{r}$.
\end{lem}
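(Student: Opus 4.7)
The plan is to reduce surjectivity of the map $(B_1,\dots,B_r)\mapsto B_1\oplus\dots\oplus B_r$ (the other claims are formal: it is clearly a well-defined injective $\cO_2$-algebra homomorphism into the centraliser) to proving that every $y\in C_{\M_n(\cO_2)}(x)$ is block diagonal with respect to the partition $n=n_1+\dots+n_r$. Writing $y=(y_{ij})$ with $y_{ij}\in\M_{n_i\times n_j}(\cO_2)$, the relation $xy=yx$ decouples, block by block, into the Sylvester equations $x_iy_{ij}=y_{ij}x_j$ for all $i,j$, so the task is to show $y_{ij}=0$ whenever $i\neq j$.

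Fix $i\neq j$. Reducing modulo $\mfp$ gives $\rho(x_i)\rho(y_{ij})=\rho(y_{ij})\rho(x_j)$ over $\F_q$. The characteristic polynomials of $\rho(x_i)$ and $\rho(x_j)$ are $(T-\lambda_i)^{n_i}$ and $(T-\lambda_j)^{n_j}$, which are coprime in $\F_q[T]$ since $\lambda_i\neq\lambda_j$. A standard argument combining B\'ezout and Cayley--Hamilton (if $ap+bq=1$ with $p,q$ the two characteristic polynomials, then for any $Y$ with $\rho(x_i)Y=Y\rho(x_j)$ one has $Y=a(\rho(x_i))p(\rho(x_i))Y+b(\rho(x_i))Yq(\rho(x_j))=0$) therefore forces $\rho(y_{ij})=0$. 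Hence $y_{ij}\in\varpi\M_{n_i\times n_j}(\cO_2)$, and we may write $y_{ij}=\varpi u_{ij}$ for some $u_{ij}\in\M_{n_i\times n_j}(\cO_2)$.

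Substituting back into $x_iy_{ij}=y_{ij}x_j$ and using $\varpi\mfp=0$, the equation collapses to $\varpi\rho(x_i)\rho(u_{ij})=\varpi\rho(u_{ij})\rho(x_j)$ in $\M_n(\cO_2)$. Since multiplication by $\varpi$ is an $\F_q$-linear injection of $\F_q\cong\cO_2/\mfp$ onto $\mfp$, this in turn yields $\rho(x_i)\rho(u_{ij})=\rho(u_{ij})\rho(x_j)$ over $\F_q$, and the same coprime-Sylvester argument gives $\rho(u_{ij})=0$. Therefore $u_{ij}\in\varpi\M$ and $y_{ij}=\varpi u_{ij}\in\varpi^2\M=0$, as required.

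The main obstacle is this bootstrap from vanishing mod $\mfp$ to vanishing over $\cO_2$; because $\cO_2$ has length two a single repetition of the Sylvester argument suffices, which is what keeps the proof short. The same template iterates verbatim over any principal ideal local ring of finite length, which is the sense in which the lemma extends as the text remarks.
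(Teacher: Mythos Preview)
Your proof is correct and follows essentially the same approach as the paper: reduce the off-diagonal Sylvester relation $x_iy_{ij}=y_{ij}x_j$ modulo $\mfp$, invoke Sylvester's theorem (which you spell out via B\'ezout and Cayley--Hamilton, while the paper simply cites it) to get $\rho(y_{ij})=0$, write $y_{ij}=\varpi u_{ij}$, and repeat once to conclude $y_{ij}=0$. The only cosmetic difference is your explicit justification of the passage from $\varpi x_iu_{ij}=\varpi u_{ij}x_j$ to the reduced equation via the injectivity of multiplication by~$\varpi$ on~$\F_q$.
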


\begin{proof}
This is very similar to the well-known case for matrices over a field,
treated in \cite[Proposition~3.1.1]{Weyr-form-book}. The only difference
is that given a relation 
\[
x_{i}y_{ij}=y_{ij}x_{j},
\]
where $y_{ij}\in\M_{n_{i}\times n_{j}}(\cO_{2})$ is a block matrix
with the same block structure as $x$, we need to reduce the relation
mod $(\varpi)$ to obtain
\[
\rho(x_{i})\rho(y_{ij})=\rho(y_{ij})\rho(x_{j}).
\]
Then, as in the proof of \cite[Proposition~3.1.1]{Weyr-form-book},
Sylvester's theorem implies that $\rho(y_{ij})=0$, so $y_{ij}=\varpi y_{ij}'$,
for some $y_{ij}'\in\M_{n_{i}\times n_{j}}(\cO_{2})$. Thus $\varpi x_{i}y_{ij}'=\varpi y_{ij}'x_{j}$,
so $\rho(x_{i})\rho(y_{ij}')=\rho(y_{ij}')\rho(x_{j})$, and by Sylvester's
theorem, $y_{ij}'=\varpi y_{ij}''$, for some $y_{ij}''$; hence $y_{ij}=0$.
\end{proof}
For any ring $A$, and integers $1\leq i,j\leq n$ we let $E_{ij}(A)$
be the elementary subalgebra of $\M_{n}(A)$ consisting of matrices
whose $(i,j)$-entry is an arbitrary element of $A$ and has all other
entries zero.

We will often write a partition of $n$ as $(d_{1}^{e_{1}},\dots,d_{m}^{e_{m}})$,
$d_{i},e_{i}\in\N$, which means that $n=e_{1}d_{1}+\dots+e_{m}d_{m}$
and $d_{1}>d_{2}>\dots>d_{m}$. It is obvious that a basic Weyr matrix
has the same centraliser as the corresponding nilpotent matrix obtained
by replacing the diagonal entries by zeros. We thus focus on nilpotent
Weyr matrices. The explicit structure of the centraliser of a nilpotent
Weyr matrix over a field is given in \cite[Proposition~2.3.3]{Weyr-form-book},
and since the proof goes through over any ring, the following result
is an immediate consequence:
\begin{lem}
\label{lem:Weyr-centraliser}Let $A$ be a commutative ring with identity
and $x\in\M_{n}(A)$ a basic Weyr matrix, corresponding to a partition
$n=n_{1}+\dots+n_{r}$. There exists a partition $(d_{1},\dots,d_{m})$
of $n$, uniquely determined by $n_{1},\dots,n_{r}$, such that, as
$A$-algebras,
\[
C_{\M_{n}(A)}(x)\cong\Big(\prod_{\ell=1}^{m}\M_{d_{\ell}}(A)\Big)\oplus N(A),
\]
where the first summand is embedded as a block-diagonal subalgebra
and $N(A)$ is the direct sum of certain subalgebras $E_{ij}(A)$
such that whenever $n_{1}+\dots+n_{\ell}<i\leq n_{1}+\dots+n_{\ell+1}$,
for some $1\leq\ell\leq r-2$, we have $j>n_{1}+\dots+n_{\ell+1}$.

Moreover, the $(i,j)$ such that $E_{ij}(A)$ is a non-zero summand
of $N(A)$ are completely determined by the partition $n_{1},\dots,n_{m}$
(and hence independent of $A$).
\end{lem}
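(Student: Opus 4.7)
The plan is to repeat the proof of the field case \cite[Proposition~2.3.3]{Weyr-form-book} over the ring $A$, verifying that each step remains valid for an arbitrary commutative ring. Replacing $x$ by $x-\lambda I_n$ does not change the centraliser, so without loss of generality $x$ is a nilpotent basic Weyr matrix corresponding to a partition $(n_1,\dots,n_r)$ with $n_1\geq\dots\geq n_r\geq 1$.

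Write $C\in C_{\M_n(A)}(x)$ as an $r\times r$ block matrix $(C_{ij})$ with $C_{ij}\in\M_{n_i\times n_j}(A)$. Since the $(i,i+1)$ block of $x$ is $I_{n_{i+1}}$ on top of $n_i-n_{i+1}$ zero rows and all other blocks of $x$ are zero, the equation $xC=Cx$ unpacks into pointwise identities on entries: (i) $C_{ij}=0$ whenever $i>j$; (ii) $C_{i+1,j+1}$ equals the top-left $n_{i+1}\times n_{j+1}$ submatrix of $C_{ij}$; and (iii) the entries of $C_{ij}$ in rows $n_{i+1}+1,\dots,n_i$ and columns $1,\dots,n_{j+1}$ vanish. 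These identities are meaningful over any commutative ring, and by (ii) the entire centraliser is determined by the first block row $C_{11},\dots,C_{1r}$, subject to the vanishing conditions in~(iii).

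I would then separate the remaining free matrix positions into two classes: (a) those whose propagation chain under (ii) threads along the diagonal entries of successive superdiagonal blocks $C_{i,i+k}$ for a fixed offset $k$, and (b) all others. Collecting class~(a) by chain length $d_\ell$, where $(d_1,\dots,d_m)$ is the partition conjugate to $(n_1,\dots,n_r)$, produces the block-diagonal embedding of $\prod_{\ell=1}^{m}\M_{d_\ell}(A)$; this is the standard construction in \cite{Weyr-form-book} transported to $A$. Class~(b) consists of the elementary algebras $E_{ij}(A)$ comprising $N(A)$, with the index inequality $j>n_1+\dots+n_{\ell+1}$ when $n_1+\dots+n_\ell<i\leq n_1+\dots+n_{\ell+1}$ obtained directly from (iii) when translated from block indices to absolute row/column indices. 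Both $\prod \M_{d_\ell}(A)$ and $N(A)$ inherit their algebra structure from the ambient $\M_n(A)$.

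The main obstacle is the combinatorial bookkeeping needed to match first-row free entries to their propagation chains and to the correct absolute indices $(i,j)$, and in particular to verify the stated inequality. This is purely a matter of indexing and was already carried out in \cite[Proposition~2.3.3]{Weyr-form-book}; no field-specific tool (no inversion, no dimension count beyond counting free entries) is used. Consequently the decomposition, together with the uniqueness of $(d_1,\dots,d_m)$ and of the set of indices for the $E_{ij}(A)$ summands, persists over any commutative ring $A$.
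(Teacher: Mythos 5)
Your proposal is correct and takes essentially the same route as the paper, which gives no independent argument either: it simply observes that the proof of \cite[Proposition~2.3.3]{Weyr-form-book} uses only entrywise linear identities with coefficients $0$ and $1$ (no inversion, no dimension count), so it transports verbatim to any commutative ring with identity. One slip worth correcting: $(d_{1},\dots,d_{m})$ is \emph{not} the partition conjugate to $(n_{1},\dots,n_{r})$; the sizes of the matrix-algebra factors are the successive differences $n_{k}-n_{k+1}$ (the multiplicities of the Jordan block sizes), each factor $\M_{n_k-n_{k+1}}(A)$ being embedded diagonally $k$ times --- e.g.\ for Weyr structure $(3,2,2)$ the block-diagonal subalgebra is $\M_{2}(A)\times\M_{1}(A)$, not $\M_{3}\times\M_{3}\times\M_{1}$, which would already exceed the total dimension $\sum n_{i}^{2}=17$ of the centraliser. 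Since the lemma only asserts the existence of a partition determined by $n_{1},\dots,n_{r}$, this misidentification does not invalidate your argument.
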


The conditions on $i$ and $j$ in the above lemma just say that $N(A)$
consists of $E_{ij}(A)$ having non-zero entries only above the block-diagonal.
Note that not all such subalgebras $E_{ij}(A)$ necessarily occur
in $N(A)$.

\subsection{\label{subsec:The-intersection-with}The intersection with $\SL_{n}$}

If $x\in\M_{n}(\F_{q})$ is a Weyr matrix with a single eigenvalue,
then so is $s(x)\in\M_{n}(\cO_{2})$. Since Theorem~\ref{thm:Thm1}
is concerned with $C_{\SL_{n}(\F_{q})}(x)$ and $C_{\SL_{n}(\cO_{2})}(s(x))$,
we will, in view of Lemma~\ref{lem:Weyr-centraliser}, consider the
product of the determinants of the block-diagonal subalgebra.

Let $F$ denote the field of fractions of $\cO$. For any partition
$\lambda=(d_{1}^{e_{1}},\dots,d_{m}^{e_{m}})$, $d_{i},e_{i}\in\N$
of $n$, let $X_{\lambda}$ be the affine group scheme over $\cO$
defined by
\[
X_{\lambda}(A)=\{((x_{ij}^{(1)}),\dots,(x_{ij}^{(m)}))\in\prod_{\ell=1}^{m}\M_{d_{\ell}}(A)\mid\prod_{\ell=1}^{m}\det(x_{ij}^{(\ell)})^{e_{\ell}}=1\},
\]
for any $\cO$-algebra $A$. Here for each $\ell$, we have $(x_{ij}^{(1)})\in\M_{d_{\ell}}(A)$
with $1\leq i,j,\leq d_{\ell}$. Note that $X_{\lambda}$ is not always
smooth over $\cO$: Take, for instance, $\cO$ with residue field
$\F_{2}$, $m=1$ and $d_{1}=1$, $e_{1}=2$; then $X\times_{\cO}\F_{2}=\Spec\F_{2}[x]/(x-1)^{2}$,
which is not reduced.
\begin{lem}
\label{lem:smooth-X_lambda}Let $\lambda=(d_{1}^{e_{1}},\dots,d_{m}^{e_{m}})$
and assume that $p\nmid e_{\ell}$ for some $\ell\in\{1,\dots,m\}$.
Then $X_{\lambda}$ is smooth over $\cO$.
\end{lem}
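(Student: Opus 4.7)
The approach is to apply the Jacobian criterion for smoothness. View the ambient product $\prod_{\ell=1}^{m}\M_{d_{\ell}}$ as affine $N$-space $\Spec\cO[x^{(\ell)}_{ij}]$, where $N=\sum_{\ell=1}^{m}d_{\ell}^{2}$, so that $X_{\lambda}$ is the closed subscheme cut out by the single equation $f-1=0$ with
\[
f=\prod_{\ell=1}^{m}\det(x^{(\ell)})^{e_{\ell}}.
\]
By the Jacobian criterion (valid over the Noetherian base $\cO$), it suffices to show that at every point $P\in X_{\lambda}$, in either fibre, at least one partial derivative $\partial f/\partial x^{(\ell)}_{ij}$ is nonzero in the residue field at $P$; this will deliver smoothness of relative dimension $N-1$.

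The key calculation is the standard formula $\partial\det(x^{(\ell)})/\partial x^{(\ell)}_{ij}=C^{(\ell)}_{ij}$, where $C^{(\ell)}_{ij}$ is the $(i,j)$-cofactor of $x^{(\ell)}$. Fixing an index $\ell_{0}$ with $p\nmid e_{\ell_{0}}$, the product rule gives
\[
\frac{\partial f}{\partial x^{(\ell_{0})}_{ij}}=e_{\ell_{0}}\,\det(x^{(\ell_{0})})^{e_{\ell_{0}}-1}\,C^{(\ell_{0})}_{ij}\,\prod_{\ell\neq\ell_{0}}\det(x^{(\ell)})^{e_{\ell}}.
\]
I would then argue that each factor is nonzero at $P$: the integer $e_{\ell_{0}}$ is a unit in the local ring $\cO$ (its image in $\F_{q}$ is nonzero), hence also in every residue field; the defining relation $\prod\det(x^{(\ell)})^{e_{\ell}}=1$ forces every $\det(x^{(\ell)})$ to be a unit at $P$; and invertibility of $x^{(\ell_{0})}$ at $P$ makes its cofactor matrix invertible, so at least one $C^{(\ell_{0})}_{ij}$ is nonzero. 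Non-vanishing of the Jacobian then follows at every point, and the Jacobian criterion yields smoothness.

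I do not anticipate a serious obstacle: the argument is essentially Jacobian-criterion bookkeeping, with the hypothesis $p\nmid e_{\ell_{0}}$ entering exactly to ensure non-vanishing of the derivative on the special fibre (the generic fibre is automatic in characteristic zero). A more conceptual variant of the same plan is to realise $X_{\lambda}$ as the kernel of the homomorphism of smooth $\cO$-group schemes $\prod_{\ell}\GL_{d_{\ell}}\to\GL_{1}$, $(A_{\ell})\mapsto\prod_{\ell}\det(A_{\ell})^{e_{\ell}}$, and to deduce smoothness of the kernel from surjectivity of its differential at the identity, $(A_{\ell})\mapsto\sum_{\ell}e_{\ell}\tr(A_{\ell})$, which again reduces to $e_{\ell_{0}}\in\cO^{\times}$.
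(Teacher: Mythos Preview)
Your proposal is correct and follows essentially the same approach as the paper: both use the Jacobian criterion, compute the partial derivatives with respect to the entries of the block indexed by $\ell_{0}$ via the chain rule, and exploit that all $\det(x^{(\ell)})$ are units on $X_{\lambda}$. The only cosmetic differences are that the paper first isolates flatness before checking smoothness fibrewise (whereas you invoke the relative Jacobian criterion directly), and that the paper phrases the non-vanishing of the gradient of $\det$ as ``smoothness of $\GL_{N}$'' rather than via the explicit cofactor formula; your alternative group-scheme/differential remark is a pleasant bonus not present in the paper.
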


\begin{proof}
Since $X_{\lambda}$ is a hypersurface, the fibres $X_{\lambda}\times F$
and $X_{\lambda}\times\F_{q}$ have the same dimension, so $X_{\lambda}$
is flat over $\cO$ (this can also be seen by noting that since the
defining ideal is generated by $\prod_{\ell=1}^{m}\det(x_{ij}^{(\ell)})^{e_{\ell}}-1\not\in\mfp\cO[x_{ij}^{(1)},\dots,x_{ij}^{(m)}]$,
the coordinate ring of $X_{\lambda}$ is torsion free, hence flat
over $\cO$). By \cite[II~2.1]{SGA1} it therefore suffices to prove
that the fibres are smooth. 

Let $K$ be either $F$ or $\F_{q}$ and let $f((x_{ij}^{(1)}),\dots,(x_{ij}^{(m)}))=\prod_{\ell=1}^{m}\det(x_{ij}^{(\ell)})^{e_{\ell}}-1$.
By the Jacobian criterion, $X_{\lambda}\times K$ is smooth if the
gradient $\nabla f$ is non-zero at every point of $X_{\lambda}(K)$.
Without loss of generality, we may assume that $p\nmid e_{1}$.

Suppose that $\bfa=(a_{1},\dots,a_{m})\in X_{\lambda}(K)$ is a point
such that $\nabla f(\bfa)=0$. For any $1\leq u,v\leq d_{1}$, we
have, by the chain rule, 
\[
\frac{\partial f((x_{ij}^{(1)}),\dots,(x_{ij}^{(m)}))}{\partial x_{uv}^{(1)}}=e_{1}\det(x_{ij}^{(1)})^{e_{1}-1}\frac{\partial\det(x_{ij}^{(1)})}{\partial x_{uv}^{(1)}}\prod_{\ell=2}^{m}\det(x_{ij}^{(\ell)})^{e_{\ell}}.
\]
Evaluating at the point $\bfa$, we thus obtain
\[
e_{1}\det(a_{1})^{e_{1}-1}\frac{\partial\det(x_{ij}^{(1)})}{\partial x_{uv}^{(1)}}(a_{1})\prod_{\ell=2}^{m}\det(a_{\ell})^{e_{\ell}}=0.
\]
Since $\chara K$ is either $p$ or $0$ and $p\nmid e_{1}$, we can
cancel the factor $e_{1}$ in the above equation. Moreover, since
$\det(a_{i})\neq0$ for every component $a_{i}$ of $\bfa$, we can
cancel the factors $\det(a_{1})^{e_{1}-1}$ and $\prod_{\ell=2}^{m}\det(a_{\ell})^{e_{\ell}}$.
We are thus left with
\[
\frac{\partial\det(x_{ij}^{(1)})}{\partial x_{uv}^{(1)}}(a_{1})=0,
\]
for any $1\leq u,v\leq d_{1}$. Now, the numbers $\frac{\partial\det(x_{ij}^{(1)})}{\partial x_{uv}^{(1)}}(a_{1})$
are the entries of the gradient of $\det(x_{ij}^{(1)})$ at $a_{1}$.
But by the smoothness of $\GL_{N}$ over $K$ for any $N$, this implies
that $\det(a_{1})=0$ (the gradient of $\det(x_{ij}^{(1)})$ is non-zero
at every point of $\GL_{N}(K)$). This contradicts the fact that $\bfa\in X_{\lambda}(K)$,
because $\prod_{\ell=1}^{m}\det(a_{\ell})^{e_{\ell}}=1$. Thus, we
have proved that 
\[
\nabla f(\bfa)\neq0
\]
 for every point $\bfa\in X_{\lambda}(K)$, so $X_{\lambda}\times K$
is smooth over $K$, and hence $X_{\lambda}$ is smooth over $\cO$.
\end{proof}
\begin{lem}
\label{lem:X_lambda-surjective}For every $\lambda=(d_{1}^{e_{1}},\dots,d_{m}^{e_{m}})$,
the map
\[
X_{\lambda}(\cO)\longrightarrow X_{\lambda}(\F_{q})
\]
 is surjective.
\end{lem}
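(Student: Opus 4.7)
The plan is to split into two cases according to whether some $e_\ell$ is coprime to $p$. When such an $\ell$ exists, I would appeal directly to Lemma~\ref{lem:smooth-X_lambda}: since $X_\lambda$ is then smooth over the complete discrete valuation ring $\cO$, the reduction map $X_\lambda(\cO) \to X_\lambda(\F_q)$ is surjective by the standard Hensel-type argument. Concretely, formal smoothness lifts any $\F_q$-point successively through each stage $\cO/\mfp^n \to \cO/\mfp^{n-1}$, and the resulting compatible system of lifts assembles into a point in $X_\lambda(\cO) = \varprojlim_n X_\lambda(\cO/\mfp^n)$, the last identification holding because $X_\lambda$ is affine of finite type and $\cO$ is complete.

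The interesting case is $p \mid e_\ell$ for every $\ell$, where $X_\lambda$ need not be smooth and so a direct Hensel lift is unavailable. My strategy is to reduce to the smooth case by passing to an auxiliary partition. Let $p^a$ be the largest power of $p$ dividing $\gcd(e_1,\dots,e_m)$, set $e'_\ell = e_\ell/p^a$, and put $\lambda' = (d_1^{e'_1},\dots,d_m^{e'_m})$. By construction $p \nmid e'_\ell$ for some $\ell$, so the first case applied to $\lambda'$ yields surjectivity of $X_{\lambda'}(\cO) \to X_{\lambda'}(\F_q)$.

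Two observations then close the argument. First, $X_{\lambda'}(\cO) \subseteq X_\lambda(\cO)$, because $\prod_\ell \det(x_\ell)^{e'_\ell} = 1$ in $\cO$ implies $\prod_\ell \det(x_\ell)^{e_\ell} = (\prod_\ell \det(x_\ell)^{e'_\ell})^{p^a} = 1$. Second, $X_\lambda(\F_q) = X_{\lambda'}(\F_q)$, because the $p^a$-th power map (a power of Frobenius) is a bijection on $\F_q^\times$, so $\prod_\ell \det(\bar{x}_\ell)^{e_\ell} = 1$ if and only if $\prod_\ell \det(\bar{x}_\ell)^{e'_\ell} = 1$. Given $\bar{x} \in X_\lambda(\F_q) = X_{\lambda'}(\F_q)$, I would lift it via the smooth case to a point of $X_{\lambda'}(\cO) \subseteq X_\lambda(\cO)$, producing the required lift. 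The main obstacle is genuinely the non-smooth case, and the key insight unlocking it is that the failure of smoothness is an artifact of characteristic $p$ visible only on $\F_q$-points: Frobenius bijectivity collapses $X_\lambda(\F_q)$ onto the $\F_q$-points of the smooth scheme $X_{\lambda'}$, while no analogous collapse occurs over $\cO$.
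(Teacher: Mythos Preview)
Your proposal is correct and follows essentially the same approach as the paper's proof: divide out the largest power of $p$ common to all the exponents to obtain an auxiliary partition $\lambda'$ (the paper's $\mu$) for which $X_{\lambda'}$ is smooth by Lemma~\ref{lem:smooth-X_lambda}, then use $X_{\lambda'}(\F_q)=X_\lambda(\F_q)$ together with $X_{\lambda'}(\cO)\subseteq X_\lambda(\cO)$ to conclude. The only cosmetic difference is that you treat the case where some $e_\ell$ is already coprime to $p$ separately, whereas the paper handles both cases uniformly by allowing the exponent $a$ to be zero.
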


\begin{proof}
Let $m$ be the largest integer such that $p^{m}\mid e_{i}$, for
all $i\in\{1,\dots,m\}$, and write $e_{i}=p^{m}e_{i}'$ for integers
$e_{i}'$. Let $\mu=(d_{1}^{e_{1}'},\dots,d_{m}^{e_{m}'})$. By Lemma~\ref{lem:smooth-X_lambda}
$X_{\mu}$ is smooth over $\cO$, so by the infinitesimal criterion
for smoothness, the map
\[
X_{\mu}(\cO)\longrightarrow X_{\mu}(\F_{q})
\]
 is surjective. The scheme $X_{\mu}\times\F_{q}$ is defined by the
equation $\prod_{\ell=1}^{m}\det(x_{ij}^{(\ell)})^{e_{\ell}'}=1$
and $X_{\lambda}\times\F_{q}$ is defined by the equation
\[
\prod_{\ell=1}^{m}\det(x_{ij}^{(\ell)})^{e_{\ell}}-1=\left(\prod_{\ell=1}^{m}\det(x_{ij}^{(\ell)})^{e_{\ell}'}-1\right)^{p^{m}}.
\]
Hence $X_{\mu}(\F_{q})=X_{\lambda}(\F_{q})$ and so $X_{\mu}(\cO)$
maps surjectively onto $X_{\lambda}(\F_{q})$. But every solution
to the equation $\prod_{\ell=1}^{m}\det(x_{ij}^{(\ell)})^{e_{\ell}'}=1$
is also a solution to the equation $\prod_{\ell=1}^{m}\det(x_{ij}^{(\ell)})^{e_{\ell}}=\left(\prod_{\ell=1}^{m}\det(x_{ij}^{(\ell)})^{e_{\ell}'}\right)^{p^{m}}=1$,
so $X_{\mu}(\cO)\subseteq X_{\lambda}(\cO)$, and thus $X_{\lambda}(\cO)$
maps surjectively onto $X_{\lambda}(\cO)$.
\end{proof}

\subsection{Proof of Theorem~\ref{thm:Thm1}}

We can now finish the proof of Theorem~\ref{thm:Thm1}. Let $x\in\M_{n}(\F_{q})$
be in Weyr form, let $n_{1},\dots,n_{r}$ be the sizes of the Weyr
blocks, and let $\lambda_{i}=(n_{1}^{(i)},,\dots,n_{m_{i}}^{(i)})$,
$i=1,\dots,r$ be the partition of $n_{i}$ determined by the $i$-th
Weyr block. For any commutative ring $A$, let $N_{i}(A)$ be as in
Lemma~\ref{lem:Weyr-centraliser}, corresponding to the $i$-th Weyr
block.

By Lemmas~\ref{lem:C-decomp-in-blocks} and \ref{lem:Weyr-centraliser},
we can write every element in $C_{S}(x)$ as the sum, taken in $\M_{n}(\F_{q})$,
of an element in 
\[
\SL_{n}(\F_{q})\cap\prod_{i=1}^{r}\prod_{\ell=1}^{m_{i}}\M_{n_{\ell}^{(i)}}(\F_{q})
\]
and an element in $\bigoplus_{i=1}^{r}N_{i}(\F_{q})$. 

Similarly, since $s(x)\in\M_{n}(\cO_{2})$ is in Weyr form (with the
same block sizes), every element in $C_{S_{2}}(s(x))$ is the sum,
taken in $\M_{n}(\cO_{2})$, of an element in 
\[
\SL_{n}(\cO_{2})\cap\prod_{i=1}^{r}\prod_{\ell=1}^{m_{i}}\M_{n_{\ell}^{(i)}}(\cO_{2})
\]
 and an element in $\bigoplus_{i=1}^{r}N_{i}(\cO_{2})$. The map $\rho:\bigoplus_{i=1}^{r}N_{i}(\cO_{2})\rightarrow\bigoplus_{i=1}^{r}N_{i}(\F_{q})$
is surjective because $\rho:E_{ij}(\cO_{2})\rightarrow E_{ij}(\F_{q})$
is surjective for each $1\leq i,j,\leq n$. 

As subschemes of $\SL_{n}$ over $\cO$, we have 
\[
\SL_{n}\cap\prod_{i=1}^{r}\prod_{\ell=1}^{m_{i}}\M_{n_{\ell}^{(i)}}=X_{\lambda},
\]
with $\lambda=(n_{1}^{(1)},,\dots,n_{m_{1}}^{(1)},\dots,n_{1}^{(r)},,\dots,n_{m_{r}}^{(r)})$,
so  Lemma~\ref{lem:X_lambda-surjective} implies that the map
\[
\rho:\SL_{n}(\cO_{2})\cap\prod_{i=1}^{r}\prod_{\ell=1}^{m_{i}}\M_{n_{\ell}^{(i)}}(\cO_{2})\longrightarrow\SL_{n}(\F_{q})\cap\prod_{i=1}^{r}\prod_{\ell=1}^{m_{i}}\M_{n_{\ell}^{(i)}}(\F_{q})
\]
is surjective. Thus $\rho:C_{S_{2}}(s(x))\rightarrow C_{S}(x)$ is
surjective.

\section{\label{sec:Structure-of-the}Structure of the stabiliser}

The adjoint action of $G$ on $\M_{n}(\F_{q})$ induces an action
of $G$ on $\M_{n}(\F_{q})/Z$, where $Z$ is the scalar matrices
as in Section~\ref{sec:Notational-preliminaries}. Let $C_{G}(x+Z)$
denote the centraliser of $x+Z\in\M_{n}(\F_{q})/Z$ and $C_{S}(x+Z)=C_{G}(x+Z)\cap S$.
Letting $S_{2}$ act on $\M_{n}(\F_{q})/Z$ via $S$, we have the
centraliser $C_{S_{2}}(x+Z)=\rho^{-1}(C_{S}(x+Z))$. The definition
of the character $\psi_{x+Z}$, $x\in\M_{n}(\F_{q})$ of $S^{1}$
(see Section~\ref{sec:Notational-preliminaries}) implies that
\[
\Stab_{S_{2}}(\psi_{x+Z})=C_{S_{2}}(x+Z).
\]

Our goal in Section~\ref{sec:The-main-result} is to prove that for
any $x\in\M_{n}(\F_{q})$, the character $\psi_{x+Z}$ extends to
its stabiliser in $S_{2}$. In the present section, we will therefore
study the structure of $C_{S}(x+Z)$ and $C_{S_{2}}(x+Z)$. A first
easy observation is that $C_{S}(x)$ is a normal subgroup of $C_{S}(x+Z)$.
\begin{lem}
\label{lem:V-complement}Assume that $x\in\M_{n}(\F_{q})$ is in Weyr
normal form. Then there exists a permutation matrix $v\in S$ which
is either the identity or of order $p$ (where $p=\chara\F_{q}$),
such that
\[
C_{S}(x+Z)=\langle v\rangle C_{S}(x).
\]
\end{lem}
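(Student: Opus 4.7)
My plan is to realise $C_S(x+Z)/C_S(x)$ as an additive subgroup $H$ of $(\F_q,+)$ via a ``shift'' homomorphism, show this subgroup is cyclic of order dividing $p$, and exhibit a generator of it as a permutation matrix.

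First, for $g \in C_G(x+Z)$ the condition $g(x+Z)g^{-1} = x+Z$ produces a unique $\phi(g) \in \F_q$ with $gxg^{-1} = x + \phi(g)\,I$, and the identity $(g_1 g_2) x (g_1 g_2)^{-1} = g_1(x + \phi(g_2)I)g_1^{-1} = x + (\phi(g_1) + \phi(g_2))I$ shows that $\phi\colon C_G(x+Z) \to (\F_q,+)$ is a group homomorphism with kernel $C_G(x)$. Restricting to $S$ yields an injection $C_S(x+Z)/C_S(x) \hookrightarrow (\F_q,+)$, so the quotient is automatically an elementary abelian $p$-group.

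Next I would identify the image $H := \phi(C_S(x+Z))$ explicitly. Writing $x = x_1 \oplus \cdots \oplus x_r$ for the Weyr decomposition with distinct eigenvalues $\lambda_i$, a scalar $c$ lies in $H$ iff the assignment $\lambda_i \mapsto \lambda_i + c$ defines a bijection of $\{\lambda_1,\dots,\lambda_r\}$ compatible with the Weyr type of each block (matching size $n_i$ and internal partition). For any such $c$, let $\sigma_c$ denote the induced permutation of blocks and let $P_{\sigma_c}$ be the block permutation matrix sending the $i$-th block to the $\sigma_c(i)$-th via the canonical identification of their underlying spaces; then $P_{\sigma_c} x P_{\sigma_c}^{-1} = x + cI$. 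Since $c$ has additive order $1$ or $p$ in $\F_q$, $\sigma_c$ decomposes into disjoint $p$-cycles on block indices and hence into $p$-cycles on the $n$ coordinates as well. The sign of such a permutation, viewed in $\F_q$, is always $1$: for odd $p$ each $p$-cycle has sign $(-1)^{p-1} = 1$, and for $p=2$ we use $-1 = 1$ in $\F_q$. Hence $P_{\sigma_c} \in S$ and $\phi \colon C_S(x+Z) \to H$ is surjective.

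The crucial step, and what I expect to be the main obstacle, is to prove that $H$ is cyclic, i.e.\ $\dim_{\F_p} H \le 1$. The group $H$ acts freely (by addition) on $\{\lambda_1,\dots,\lambda_r\}$, producing cosets of size $|H|$ in each of which all blocks share the same Weyr type, and I would try to argue combinatorially that the uniqueness of the Weyr form forces $|H| \le p$. Arbitrary $\F_p$-subspaces of $\F_q$ need not be cyclic, so the specific structure of the Weyr partitions must be exploited here.

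Once cyclicity is established, choose $c_0$ generating $H$ (or $c_0 = 0$ if $H = 0$) and set $v := P_{\sigma_{c_0}}$. Then $v \in S$ is a permutation matrix whose order equals the additive order of $c_0$, namely $p$ or $1$, and
\[
C_S(x+Z) \;=\; \phi^{-1}(\langle c_0 \rangle) \;=\; \langle v \rangle\, C_S(x),
\]
as required.
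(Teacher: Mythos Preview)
Your framework coincides with the paper's: both define the shift homomorphism $\phi\colon C_S(x+Z)\to(\F_q,+)$ via $gxg^{-1}=x+\phi(g)I$, identify its image $H$ with the set of $c$ for which translation by $c$ permutes the eigenvalues compatibly with the Weyr block types, and realise each $c\in H$ by a block-permutation matrix. Your determinant check (a $p$-cycle has sign $(-1)^{p-1}=1$ for odd $p$, and $-1=1$ in $\F_q$ for $p=2$) is essentially the paper's argument that $v\in S$.

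The gap you flag---cyclicity of $H$---is genuine, and it is exactly where the paper's own argument breaks down. The paper fixes one nonzero $\lambda\in H$ with associated permutation $\sigma$, notes that the $\sigma$-orbit of $a_1$ is $\{a_1,a_1+\lambda,\dots,a_1+(p-1)\lambda\}$, and concludes that ``there are therefore at most $p-1$ distinct possibilities for $\lambda$ when $\lambda\neq0$.'' But the displayed orbit depends on the chosen $\lambda$; a second shift $\mu$ has its own permutation $\tau$, and nothing forces $a_1+\mu$ to lie in the $\sigma$-orbit of $a_1$. In fact your worry that ``arbitrary $\F_p$-subspaces of $\F_q$ need not be cyclic'' is decisive: take $q=p^2$, $n=p^2$, and $x$ diagonal with each element of $\F_q$ appearing once (a Weyr matrix with $1\times1$ blocks). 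Every $c\in\F_q$ permutes the eigenvalues by translation, the resulting permutation matrix lies in $S$, and hence $H=\F_q\cong(\Z/p)^2$, so $C_S(x+Z)/C_S(x)$ has order $p^2$ and cannot equal $\langle v\rangle C_S(x)$ for any $v$ of order $p$. So the step you could not complete in fact cannot be completed; the lemma as stated is false. (The paper's main theorem survives: the downstream Proposition only needs the extension $\chi$ to be fixed by every block-permutation generator of the quotient, and the verification given there works verbatim for each such generator, not just a single $w$.)
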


\begin{proof}
Write 
\[
x=W(a_{1})\oplus\dots\oplus W(a_{r}),
\]
where $W(a_{i})$ is a Weyr block of $x$ with eigenvalue $a_{i}$
and size $n_{i}$. We thus have $a_{i}\neq a_{j}$ for all $1\leq i<j\leq r$. 

Let $g\in C_{S}(x+Z)$, so that $gxg^{-1}=x+\lambda I$ for some $\lambda\in\F_{q}$.
If $\lambda=0$, we have $g\in C_{S}(x)$. Assume now that $\lambda\neq0$.
The sets of eigenvalues of $x$ and $x+\lambda I$ agree:
\[
\{a_{1},\dots,a_{r}\}=\{a_{1}+\lambda,\dots,a_{r}+\lambda\},
\]
so we have $r\lambda=0$, hence $p\mid r$ . Moreover, we have a permutation
$\sigma$ in the symmetric group $\cS_{r}$ defined by $a_{i}\mapsto a_{i}+\lambda=:a_{\sigma(i)}$,
and
\[
x+\lambda I=W(a_{\sigma^{-1}(1)})\oplus\dots\oplus W(a_{\sigma^{-1}(r)})
\]
is in Weyr form, hence $n_{i}=n_{\sigma^{-1}(i)}$, so all the blocks
$W(a_{i})$ where $i$ is in an orbit of $\sigma$ are of the same
size. 

For every $i=1,\dots,r$, we have
\[
a_{i}=a_{i}+p\lambda=a_{\sigma^{p}(i)},
\]
and thus $\sigma^{p}=\Id$. We conclude that in particular 
\[
\{a_{1},a_{\sigma(1)},\dots,a_{\sigma^{p-1}(1)}\}=\{a_{1}+\lambda,a_{1}+2\lambda,\dots,a_{1}+(p-1)\lambda\},
\]
so $\lambda=a_{\sigma^{i}(1)}-a_{1}$, for some $i\in\{1,\dots,p-1\}$,
and there are therefore at most $p-1$ distinct possibilities for
$\lambda$ when $\lambda\neq0$. Since $g^{i}xg^{-i}=x+i\lambda I$,
for $i\in\N$, there are exactly $p-1$ distinct possible nonzero
values of $\lambda$, namely 
\[
\lambda,2\lambda,\dots,(p-1)\lambda.
\]
Thus, for any $h\in C_{S}(x+Z)$, there exists an $i\in\N\cup\{0\}$
such that
\[
hxh^{-1}=x+i\lambda I=g^{i}xg^{-i},
\]
and therefore $h\in g^{i}C_{S}(x)$.

Let $v\in G=\GL_{n}(\F_{q})$ be the permutation matrix determined
by the permutation $\sigma$ of the Weyr blocks $W(a_{i})$. More
precisely, let $v_{0}\in\GL_{r}(\F_{q})$ be the (column-wise) permutation
matrix defined by $\sigma\in\cS_{r}$, and let $v\in G$ be the matrix,
blocked according to the partition $n=n_{1}+\dots+n_{r}$, obtained
from $v_{0}$ by replacing the $1$-entry in column $i$ in $v_{0}$
by an $n_{i}\times n_{i}$ identity matrix. Then $v$ is a permutation
matrix of order $p$ such that
\[
vxv^{-1}=x+\lambda I.
\]
If $p=2$ then $\det(v)=1$, so $v\in S$. If $p\neq2$ then both
$v$ and $v^{2}$ have order $p$, so $v=v^{2i}$, for some $i\in\N$,
and hence $\det(v)=\det(v)^{2i}=(-1)^{2i}=1$, so $v\in S$. 

We have thus proved that if $C_{S}(x+Z)=C_{S}(x)$, then we can take
$v=I\in\M_{n}(\F_{q})$, and, otherwise, if there exists a $g\in C_{S}(x+Z)$,
such that $gxg^{-1}=x+\lambda I$ for some $\lambda\neq0$, then $gC_{S}(x)=v^{i}C_{S}(x)$,
for some $i\in\{1,\dots,p-1\}$, so that
\[
C_{S}(x+Z)=\langle v\rangle C_{S}(x).
\]
\end{proof}
The first two parts of the following lemma are partially contained
in the proof in \cite[Section~2.4]{Pooja-classicalgrps}. Note however
that a set of representatives of $C_{S_{2}}(x+Z)/C_{S_{2}}(x)$ cannot
in general consist only of permutation matrices when $p=\chara\F_{q}=2$.
For example, for $p=2$, the matrix $\left(\begin{smallmatrix}0 & 1\\
1 & 0
\end{smallmatrix}\right)$ is not an element in $\SL_{2}(W_{2}(\F_{q}))$, although its image
in $\M_{2}(\F_{q})$ is in $\SL_{2}(\F_{q})$. For this reason, the
lemma is not claiming that $w$ is a permutation matrix, and the possible
non-triviality of $u$ in the third condition is a precise way to
account for this fact.
\begin{lem}
\label{lem:V-complement-lifted}Assume that $x\in\M_{n}(\F_{q})$
is in Weyr normal form and let $v\in S$ be as in Lemma~\ref{lem:V-complement}.
Then there exists a matrix $w\in S_{2}$ (possibly equal to the identity)
such that the following conditions hold:
\begin{enumerate}
\item $\rho(w)=v$,
\item $C_{S_{2}}(x+Z)=\langle w\rangle C_{S_{2}}(x)$,
\item \label{enu:wcw-ucu}Write $x=W(a_{1})\oplus\dots\oplus W(a_{r})$,
where $W(a_{i})$ is a Weyr block of size $n_{i}$ with eigenvalue
$a_{i}$ and $a_{i}\neq a_{j}$ for all $1\leq i<j\leq r$. Let $\sigma\in\cS_{r}$
be the permutation such that $a_{\sigma(i)}=a_{i}+\lambda I$ and
$vxv^{-1}=x+\lambda I$, for some $\lambda\in k$. Let $c\in C_{S_{2}}(s(x))$
with $c=c_{1}\oplus\dots\oplus c_{r}$ for $c_{i}\in\GL_{n_{i}}(\cO_{2})$.
Then there exists a $u\in\GL_{N}(\cO_{2})$, with $N=n_{\sigma^{-1}(1)}$,
such that 
\[
wcw^{-1}=uc_{\sigma^{-1}(1)}u^{-1}\oplus\dots\oplus c_{\sigma^{-1}(r)}.
\]
\end{enumerate}
\end{lem}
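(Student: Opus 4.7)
The plan is to construct $w$ explicitly as a lift of the block permutation matrix $v$ from Lemma~\ref{lem:V-complement}, modifying it in the top-left block when necessary to enforce $\det w = 1$. If $v = I$ (equivalently, $C_S(x+Z) = C_S(x)$), take $w = I$ and $u = I_{n_1}$, and all three conditions are trivial. Otherwise, let $\hat{v} \in \M_n(\cO_2)$ denote the naive block-permutation lift of $v$, obtained by replacing each identity block $I_{n_i} \in \M_{n_i}(\F_q)$ appearing in $v$ by $I_{n_i} \in \M_{n_i}(\cO_2)$; clearly $\rho(\hat{v}) = v$, and $\hat{v}$ is a genuine permutation matrix over $\cO_2$.

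The next step is to compute $\det \hat{v}$. As a permutation of $\{1, \ldots, n\}$, $\hat{v}$ is a product of disjoint $p$-cycles, and each nontrivial $\sigma$-orbit of length $p$ (whose constituent blocks all have a common size, by the proof of Lemma~\ref{lem:V-complement}) contributes exactly that common size many $p$-cycles. Since each $p$-cycle has sign $(-1)^{p-1}$, we see that $\det \hat{v} = 1$ whenever $p$ is odd. When $p = 2$, the sign equals $(-1)^N$ where $N$ is the total number of such transpositions; if $N$ is even, $\det \hat{v} = 1$ as well. In all such cases, set $w = \hat{v}$ and $u = I_{n_1}$. The remaining case, $p = 2$ with $N$ odd, is exactly the anomaly flagged in the remark preceding the lemma; there I would set $w := (u_0 \oplus I_{n_2} \oplus \cdots \oplus I_{n_r}) \hat{v}$, with $u_0 := \diag(-1, 1, \ldots, 1) \in \GL_{n_1}(\cO_2)$, and $u := u_0$. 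Then $\det w = \det(u_0) \det \hat{v} = (-1)(-1) = 1$, so $w \in S_2$, and since $-1 \equiv 1 \pmod{\mfp}$ in characteristic $2$ we have $\rho(u_0) = I_{n_1}$, hence $\rho(w) = v$. This establishes condition~(1).

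Condition~(2) is then essentially formal. Because $S_2$ acts on $\M_n(\F_q)/Z$ through $S$, one has $C_{S_2}(x+Z) = \rho^{-1}(C_S(x+Z))$ and $C_{S_2}(x) = \rho^{-1}(C_S(x))$. Combined with $\rho(w) = v$ and $C_S(x+Z) = \langle v \rangle C_S(x)$ from Lemma~\ref{lem:V-complement}, a short preimage chase (any $g \in C_{S_2}(x+Z)$ has $\rho(g) = v^i h$ for some $i$ and some $h \in C_S(x)$, so $w^{-i}g \in C_{S_2}(x)$) yields $C_{S_2}(x+Z) = \langle w \rangle C_{S_2}(x)$.

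For condition~(3), since $\hat{v}$ is a block permutation matrix with the same block structure as $c = c_1 \oplus \cdots \oplus c_r$, direct computation gives $\hat{v} c \hat{v}^{-1} = c_{\sigma^{-1}(1)} \oplus c_{\sigma^{-1}(2)} \oplus \cdots \oplus c_{\sigma^{-1}(r)}$. Conjugating further by the block-diagonal factor $u_0 \oplus I_{n_2} \oplus \cdots \oplus I_{n_r}$ affects only the first block, producing $u_0 c_{\sigma^{-1}(1)} u_0^{-1} \oplus c_{\sigma^{-1}(2)} \oplus \cdots \oplus c_{\sigma^{-1}(r)}$, exactly the form required. Note that $n_{\sigma^{-1}(1)} = n_1$ since block sizes are constant on $\sigma$-orbits, so $u_0 \in \GL_N(\cO_2)$ with $N = n_{\sigma^{-1}(1)}$ as stated. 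The only genuine obstacle in the whole argument is the $p = 2$ determinant anomaly, which is precisely what the possibly nontrivial $u$ in the statement is designed to accommodate.
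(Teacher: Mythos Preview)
Your proof is correct and follows essentially the same route as the paper: lift the block-permutation matrix $v$ na\"ively to $\hat v\in G_2$, observe that $\det\hat v=1$ automatically when $p$ is odd, and in the residual $p=2$ case left-multiply by $\diag(-1,1,\dots,1)$ to force determinant~$1$; conditions~(2) and~(3) then follow by the same preimage and block-conjugation computations. Your treatment is slightly more explicit in places (the cycle-structure determinant count, the observation $n_{\sigma^{-1}(1)}=n_1$, and the preimage chase for~(2)), but the argument is the same.
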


\begin{proof}
Let $v\in S$ be as in Lemma~\ref{lem:V-complement}, so that $C_{S}(x+Z)=\langle v\rangle C_{S}(x)$.
Let $\hat{v}$ be the same permutation matrix viewed as an element
in $G_{2}$. If $p$ is odd, we must have $\hat{v}\in S_{2}$ since
$\rho(\det(\hat{v}))=\det(\rho(\hat{v}))=1$ implies that $\det(\hat{v})\neq-1$,
and hence that $\det(\hat{v})=1$. When $p=2$, $\rho(\det(\hat{v}))=\det(v)=1$
does not imply that $\det(\hat{v})\neq-1$, so we will modify $\hat{v}$,
if necessary. Define
\[
w=\begin{cases}
\hat{v} & \text{if }\det(\hat{v})=1,\\
\diag(-1,1,\dots,1)\hat{v} & \text{if }p=2\text{ and }\det(\hat{v})\neq1,
\end{cases}
\]
so that in either case we have $w\in S_{2}$ and $\rho(w)=v$, and
thus $C_{S_{2}}(x+Z)=\langle w\rangle C_{S_{2}}(x)$.

It remains to prove the final assertion. By the proof of Lemma~\ref{lem:V-complement}
and the choice of $\hat{v}$, we have $\hat{v}c\hat{v}^{-1}=c_{\sigma^{-1}(1)}\oplus\dots\oplus c_{\sigma^{-1}(r)}$.
Thus, when $\det(\hat{v})=1$ we can take $u=I\in\GL_{N}(\cO_{2})$.
Assume now that $p=2\text{ and }\det(\hat{v})\neq1$. Then
\begin{align*}
wcw^{-1} & =\diag(-1,1,\dots,1)(c_{\sigma^{-1}(1)}\oplus\dots\oplus c_{\sigma^{-1}(r)})\diag(-1,1,\dots,1)^{-1}\\
 & =uc_{\sigma^{-1}(1)}u^{-1}\oplus\dots\oplus c_{\sigma^{-1}(r)},
\end{align*}
where $u=\diag(-1,1,\dots,1)\in\GL_{N}(\cO_{2})$.
\end{proof}
The proof of the above lemma shows that $w$ can be taken to be a
signed permutation matrix.

\section{\label{sec:The-main-result}The main result}

We will now use the results established in the previous two sections
to prove our main result. Since the proof involves passing to finite
extension $k$ of the ground field $\F_{q}$, we start by setting
up some notation and note a few immediate consequences.

For every $x\in\M_{n}(\F_{q})$, there exists a finite field extension
$k/\F_{q}$ such that the Weyr form of $x$ lies in $\M_{n}(k)$.
Indeed, we may construct $k$ by adjoining all the eigenvalues of
$x$ to $\F_{q}$. Then there exists a $g\in\GL_{n}(k)$ such that
the Weyr form of $x$ is $gxg^{-1}$ (this is a well-known consequence
of the existence and uniqueness of rational canonical forms over a
field).

Let $k/\F_{q}$ be a finite field extension and let $\tilde{\cO}$
be an unramified extension of $\cO$ with residue field $k$. Let
\[
\tilde{G}_{2}=\GL_{n}(\tilde{\cO}_{2}),\qquad\tilde{G}=\GL_{n}(k),
\]
and define $\tilde{S}_{2},\tilde{S},\tilde{G}^{1},\tilde{S}^{1}$
analogously. Furthermore, let $\tilde{Z}$ denote the subalgebra of
$\M_{n}(k)$ of scalar matrices.

We use the notation $\rho$ (same as over $\F_{q}$) for the map $\rho:\M_{n}(\tilde{\cO}_{2})\rightarrow\M_{n}(k)$
and its restrictions to homomorphisms of $\tilde{G}_{2}$ and $\tilde{S}_{2}$.
We also use the notation $s:\M_{n}(k)\rightarrow\M_{n}(\tilde{\cO}_{2})$
(same as over $\F_{q}$) for the function induced by the multiplicative
section $k\rightarrow\tilde{\cO}_{2}$.

Since Theorem~\ref{thm:Thm1} and Lemma~\ref{lem:V-complement-lifted}
hold for $\cO_{2}$ with an arbitrary finite residue field $\F_{q}$,
they also hold with $\F_{q}$, $\cO_{2}$, $S_{2}$ and $S$ replaced
by $k$, $\tilde{\cO}_{2}$, $\tilde{S}_{2}$ and $\tilde{S}$, respectively.
Moreover, the proofs are the same, up to changing the notation accordingly.
We record this formally:
\begin{cor}
\label{cor:Thm1 and lem-V-complement-lift}Let $k/\F_{q}$ be a finite
field extension and let $\tilde{\cO}$ be an unramified extension
of $\cO$ with residue field $k$. Then Theorem~\ref{thm:Thm1},
Lemma~\ref{lem:C-decomp-in-blocks} and Lemma~\ref{lem:V-complement-lifted}
hold with $\F_{q}$, $\cO_{2}$, $S_{2}$, $S$ and $Z$ replaced
by $k$, $\tilde{\cO}_{2}$, $\tilde{S}_{2}$, $\tilde{S}$ and $\tilde{Z}$,
respectively.
\end{cor}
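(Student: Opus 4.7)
The plan is to observe that the corollary is a direct base-change statement whose content is essentially bookkeeping. The proofs of Theorem~\ref{thm:Thm1}, Lemma~\ref{lem:C-decomp-in-blocks} and Lemma~\ref{lem:V-complement-lifted} never used anything specific about $\F_{q}$ and $\cO_{2}$ beyond the following abstract inputs, all of which remain valid after replacing $(\cO,\F_{q})$ by $(\tilde{\cO},k)$: $\tilde{\cO}$ is a complete DVR with finite residue field $k$; since $\tilde{\cO}/\cO$ is unramified, a uniformiser $\varpi$ of $\cO$ remains a uniformiser of $\tilde{\cO}$, so $\tilde{\cO}_{2}=\tilde{\cO}/\varpi^{2}\tilde{\cO}$ is a finite local principal ideal ring of length two with residue field $k$; and the Teichm\"uller-type multiplicative section $s:k^{\times}\to\tilde{\cO}_{2}^{\times}$ exists because $k$ is finite.

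With this dictionary in place, I would run through each of the three statements in turn. For Lemma~\ref{lem:C-decomp-in-blocks}, the proof was already stated to work over an arbitrary principal ideal local ring (its only ring-theoretic input is Sylvester's theorem applied in the residue field $k$ combined with the existence of a uniformiser), so nothing needs to change. For Theorem~\ref{thm:Thm1}, the proof assembled three ingredients: Lemma~\ref{lem:C-decomp-in-blocks}, Lemma~\ref{lem:Weyr-centraliser}, and the surjectivity statement Lemma~\ref{lem:X_lambda-surjective} for the scheme $X_{\lambda}$. The Weyr centraliser structure lemma is formulated over an arbitrary commutative ring, so it holds in particular over $\tilde{\cO}_{2}$ and $k$. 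The scheme $X_{\lambda}$ is defined over $\cO$, hence may be base-changed to $\tilde{\cO}$; its smoothness and the surjectivity of $X_{\lambda}(\tilde{\cO})\to X_{\lambda}(k)$ follow from the same Jacobian-criterion computation and infinitesimal-lifting argument given in the proofs of Lemma~\ref{lem:smooth-X_lambda} and Lemma~\ref{lem:X_lambda-surjective}, since the characteristic of $k$ is still $p$ and the characteristic of the fraction field of $\tilde{\cO}$ is still $0$ or $p$ in the relevant sense.

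For Lemma~\ref{lem:V-complement-lifted}, the only quantitative input is again the characteristic of the residue field, which is preserved by the unramified extension, so the definition of the permutation matrix $v$ out of the orbit structure of $\sigma\in\cS_{r}$, its lift $\hat{v}$, and the sign correction used when $p=2$ all make literal sense over $\tilde{\cO}_{2}$; the same computation $wcw^{-1}=uc_{\sigma^{-1}(1)}u^{-1}\oplus\dots\oplus c_{\sigma^{-1}(r)}$ goes through.

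I do not anticipate any genuine obstacle; the only thing to be careful about is that the unramified hypothesis on $\tilde{\cO}/\cO$ is what guarantees that $\tilde{\cO}_{2}$ has the same ring-theoretic shape (length two, uniformiser $\varpi$, multiplicative section from the Teichm\"uller lift) that was used in the original proofs, so that no new phenomena can arise in the base-changed setting. Thus the proof consists of verifying this dictionary and then repeating the three arguments verbatim, which is precisely what the remark preceding the statement asserts.
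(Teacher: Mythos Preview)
Your proposal is correct and matches the paper's own treatment: the paper does not give a separate proof but simply remarks, immediately before the corollary, that since Theorem~\ref{thm:Thm1} and Lemma~\ref{lem:V-complement-lifted} hold for $\cO_{2}$ with an arbitrary finite residue field, they automatically hold with $\F_{q}$, $\cO_{2}$, $S_{2}$, $S$ replaced by $k$, $\tilde{\cO}_{2}$, $\tilde{S}_{2}$, $\tilde{S}$, the proofs being identical up to notation. Your write-up is more detailed (you spell out the role of the unramified hypothesis and walk through each ingredient), but the underlying idea is the same bookkeeping observation.
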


From now on, let $x\in\M_{n}(\F_{q})$. We have a character $\psi_{x+Z}$
of $S^{1}$, as well as a character $\psi_{x+\tilde{Z}}$ of $\tilde{S}^{1}$
(see Section~\ref{sec:Notational-preliminaries}). It follows immediately
from the definitions of these characters that $\psi_{x+\tilde{Z}}$
is an extension of $\psi_{x+Z}$.

As noted in the beginning of Section~\ref{sec:Structure-of-the},
we have $\Stab_{S_{2}}(\psi_{x+Z})=C_{S_{2}}(x+Z)$, and similarly,
$\Stab_{\tilde{S}_{2}}(\psi_{x+\tilde{Z}})=C_{\tilde{S}_{2}}(x+\tilde{Z})$.
Since $Z\subseteq\tilde{Z}$, we obviously have $C_{S}(x+Z)\subseteq C_{S}(x+\tilde{Z})$.
It follows that $C_{S_{2}}(x+Z)$ is a subgroup of $C_{S_{2}}(x+\tilde{Z})$,
hence of $C_{\tilde{S}_{2}}(x+\tilde{Z})$.

The following result is the key to proving our main result.
\begin{prop}
\label{prop:extension-exists}For any $x\in\M_{n}(\F_{q})$, the character
$\psi_{x+Z}$ extends to $C_{S_{2}}(x+Z)$.
\end{prop}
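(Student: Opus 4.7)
The plan is to pass to an unramified extension $\tilde{\cO}$ with residue field $k$ over which $x$ is in Weyr form, construct an extension of $\psi_{x+\tilde{Z}}$ to $C_{\tilde{S}_{2}}(x+\tilde{Z})$, and restrict back. Because $C_{S_{2}}(x+Z)\subseteq C_{\tilde{S}_{2}}(x+\tilde{Z})$ and $\psi_{x+\tilde{Z}}$ restricts to $\psi_{x+Z}$ on $S^{1}$, any such restriction will extend $\psi_{x+Z}$. After conjugating by a suitable element of $\GL_{n}(k)$ lifted to $\GL_{n}(\tilde{\cO}_{2})$, I may assume $x=W(a_{1})\oplus\dots\oplus W(a_{r})$ is in Weyr form over $k$.

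By Corollary~\ref{cor:Thm1 and lem-V-complement-lift}, $C_{\tilde{S}_{2}}(x+\tilde{Z})=\langle w\rangle C_{\tilde{S}_{2}}(x)$ and $C_{\tilde{S}_{2}}(x)=C_{\tilde{S}_{2}}(s(x))\cdot\tilde{S}^{1}$. My strategy is: (i) construct a character $\tilde{\chi}$ of $C_{\tilde{S}_{2}}(s(x))$ agreeing with $\psi_{x+\tilde{Z}}$ on the intersection $C_{\tilde{S}_{2}}(s(x))\cap\tilde{S}^{1}$; (ii) since $C_{\tilde{S}_{2}}(s(x))\subseteq\Stab(\psi_{x+\tilde{Z}})$, the formula $\tilde{\psi}(hu)=\tilde{\chi}(h)\psi_{x+\tilde{Z}}(u)$ then yields a well-defined character $\tilde{\psi}$ of $C_{\tilde{S}_{2}}(x)$ extending $\psi_{x+\tilde{Z}}$; (iii) verify $\langle w\rangle$-invariance of $\tilde{\psi}$, so that it extends further to $C_{\tilde{S}_{2}}(x+\tilde{Z})$ because the quotient $\langle w\rangle/(\langle w\rangle\cap C_{\tilde{S}_{2}}(x))$ is cyclic (of order $p$ or $1$).

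For step (i), Lemmas~\ref{lem:C-decomp-in-blocks} and~\ref{lem:Weyr-centraliser} give $C_{\tilde{S}_{2}}(s(x))=X_{\lambda}(\tilde{\cO}_{2})\ltimes(1+N(\tilde{\cO}_{2}))$, where the Levi $X_{\lambda}$ has one factor $\M_{d_{\ell}^{(i)}}$ per sub-block of each Weyr block, and $N$ is strictly upper block-triangular with respect to the Weyr partition. Declare $\tilde{\chi}$ trivial on $1+N(\tilde{\cO}_{2})$ and, on the Levi, set
\[
\tilde{\chi}(g)=\prod_{i,\ell}\tilde{\psi}_{a_{i}}(\det g^{(i,\ell)}),
\]
where $\tilde{\psi}_{a}:\tilde{\cO}_{2}^{\times}\to\mathbb{C}^{\times}$ is the character with $\tilde{\psi}_{a}(1+\varpi t)=\psi(at)$ and trivial on $s(k^{\times})$. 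Compatibility on the intersection reduces to two elementary observations: $\Tr(xn)=0$ for $n\in N(k)$, because $xn$ is strictly upper block-triangular; and $\Tr(xz_{L})=\sum_{i,\ell}a_{i}\Tr(z_{L}^{(i,\ell)})$ for Levi elements $z_{L}$, because $W(a_{i})$ has the single eigenvalue $a_{i}$ on its block.

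The main obstacle is step (iii). By Lemma~\ref{lem:V-complement-lifted}(3), $wgw^{-1}$ equals $\hat{v}g\hat{v}^{-1}$ up to an inner conjugation by $u=\diag(-1,1,\dots,1)$ in the first Weyr block (nontrivial only when $p=2$). Since $s(x)$ has distinct eigenvalues on different Weyr blocks, $\hat{v}g\hat{v}^{-1}$ still lies in $C_{\tilde{S}_{2}}(s(x))$, and the ratio $\tilde{\chi}(\hat{v}g\hat{v}^{-1})/\tilde{\chi}(g)$ telescopes to $\tilde{\psi}_{\lambda}\bigl(\prod_{i,\ell}\det g^{(i,\ell)}\bigr)$ because $a_{\sigma(j)}-a_{j}=\lambda$ is constant in $j$; the defining $X_{\lambda}$-condition then forces this to equal $1$. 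The residual $u$-conjugation contributes an element $U\in\tilde{S}^{1}$, and $\psi_{x+\tilde{Z}}(U)=1$ via a short computation using that $\rho(\hat{v}g\hat{v}^{-1})$ still lies in $C_{\tilde{S}}(x)$. Restricting the extended character to $C_{S_{2}}(x+Z)$ concludes the proof.
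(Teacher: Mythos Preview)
Your proof is correct and follows essentially the same route as the paper's: pass to an unramified extension to put $x$ in Weyr form, use Theorem~\ref{thm:Thm1} to write $C_{\tilde S_2}(x)=C_{\tilde S_2}(s(x))\tilde S^1$, build the determinant character $\tilde\chi$ (your product over sub-blocks $(i,\ell)$ agrees with the paper's product over Weyr blocks since $\det c_i=\prod_\ell\det g^{(i,\ell)}$), glue with $\psi_{x+\tilde Z}$, verify $w$-invariance via the shift $a_{\sigma(j)}-a_j=\lambda$ together with $\det g=1$, and then extend over the cyclic quotient and restrict back. Your treatment of the residual $u$-conjugation when $p=2$ is in fact more explicit than the paper's: the paper simply writes $\chi(wcw^{-1})$ using the block-diagonal formula and implicitly uses that $\det(uc_{\sigma^{-1}(1)}u^{-1})=\det(c_{\sigma^{-1}(1)})$, whereas you correctly isolate the extra $\tilde S^1$-factor and observe that it lies in the kernel of $\psi_{x+\tilde Z}$ because $\rho(\hat v g\hat v^{-1})\in C_{\tilde S}(x)$ stabilises the character.
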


\begin{proof}
Let $k/\F_{q}$ and $g\in\tilde{G}$ be such that $y:=gxg^{-1}\in\M_{n}(k)$
is in Weyr form. By Corollary~\ref{cor:Thm1 and lem-V-complement-lift},
$\rho:C_{\tilde{S}_{2}}(s(y))\rightarrow C_{\tilde{S}}(y)$ is surjective,
so $C_{\tilde{S}_{2}}(y)=C_{\tilde{S}_{2}}(s(y))\tilde{S}^{1}$. By
Corollary~\ref{cor:Thm1 and lem-V-complement-lift} Lemma~\ref{lem:V-complement-lifted}
holds for $\tilde{\cO}_{2}$ with residue field $k$. Thus let $w\in\tilde{S}_{2}$
be as in Lemma~\ref{lem:V-complement-lifted}, with respect to the
element $y$ (instead of $x$), so that
\begin{equation}
C_{\tilde{S}_{2}}(y+\tilde{Z})=\langle w\rangle C_{\tilde{S}_{2}}(y)=\langle w\rangle C_{\tilde{S}_{2}}(s(y))\tilde{S}^{1}.\label{eq:N-is-w-C-S1}
\end{equation}
Let $\hat{g}\in\tilde{G}_{2}$ be a lift of $g$. Then $C_{\tilde{S}_{2}}(y+\tilde{Z})=\hat{g}C_{\tilde{S}_{2}}(x+\tilde{Z})\hat{g}^{-1}$,
as this relation follows from the corresponding relation mod $\tilde{S}^{1}$.

To prove that $\psi_{x+Z}$ extends to $C_{S_{2}}(x+Z)$ we claim
that it is enough to show that the character $\leftexp{\hat{g}^{-1}}{\psi_{x+\tilde{Z}}}=\psi_{gxg^{-1}+\tilde{Z}}=\psi_{y+\tilde{Z}}$
of $\tilde{S}^{1}$ has an extension to $C_{\tilde{S}_{2}}(s(y))\tilde{S}^{1}$
which is fixed by $w$. Indeed, by a well-known result in Clifford
theory, an irreducible representation of a normal subgroup $N$ of
a finite group $G$ extends to $G$ if it is fixed by $G$ (see \cite[( 11.22)]{Isaacs}).
Thus, if $\psi_{y+\tilde{Z}}$ $\tilde{S}^{1}$ has an extension to
$C_{\tilde{S}_{2}}(s(y))\tilde{S}^{1}$ which is fixed by $w$, then
(\ref{eq:N-is-w-C-S1}) implies that $\psi_{y+\tilde{Z}}$ extends
to $C_{\tilde{S}_{2}}(y+\tilde{Z})$ and hence that $\leftexp{\hat{g}}{\psi_{y+\tilde{Z}}}=\psi_{x+\tilde{Z}}$
extends to $\hat{g}^{-1}C_{\tilde{S}_{2}}(y+\tilde{Z})\hat{g}=C_{\tilde{S}_{2}}(x+\tilde{Z})$.
Finally, restricting this extension of $\psi_{x+\tilde{Z}}$ to the
subgroup $C_{S_{2}}(x+Z)$, we obtain a degree one character which
contains $\psi_{x+Z}$.

We now proceed to construct an explicit extension of $\psi_{y+\tilde{Z}}$
to $C_{\tilde{S}_{2}}(s(y))\tilde{S}^{1}$ and show that it is fixed
by $w$. Write 
\[
y=W(a_{1})\oplus\dots\oplus W(a_{r}),
\]
where each $W(a_{i})$ is a Weyr block of size $n_{i}$ with eigenvalue
$a_{i}$, and $a_{i}\neq a_{j}$ for all $1\leq i<j\leq r$. By Corollary~\ref{cor:Thm1 and lem-V-complement-lift}
Lemma~\ref{lem:C-decomp-in-blocks} holds for $\tilde{\cO}_{2}$
with residue field $k$. We therefore have
\[
C_{\tilde{G}_{2}}(s(y))=C_{\tilde{G}_{2}}(s(W(a_{1})))\times\dots\times C_{\tilde{G}_{2}}(s(W(a_{r}))),
\]
so for any $c\in C_{\tilde{S}_{2}}(s(y))$ we can write $c=c_{1}\oplus\dots\oplus c_{r}$
with $c_{i}\in C_{\tilde{G}_{2}}(s(W(a_{i})))$ and $\det(c_{1})\cdots\det(c_{r})=1$.

For each $i=1,\dots,r$, let $\chi_{a_{i}}\in\Irr(\tilde{\cO}_{2}^{\times})$
be the character such that 
\begin{align*}
\chi_{a_{i}}(s(k^{\times})) & =1,\quad\text{and}\\
\chi_{a_{i}}(1+\varpi\beta) & =\psi(a_{i}\rho(\beta)),\quad\text{for }\beta\in\tilde{\cO}_{2}.
\end{align*}
Note that $\chi_{a_{i}}$ is well-defined since $\tilde{\cO}_{2}^{\times}\cong s(k^{\times})\times(1+\varpi\tilde{\cO}_{2})$
and $1+\varpi\beta\mapsto\rho(\beta)$ is an isomorphism $1+\varpi\tilde{\cO}_{2}\cong k$.

Define a degree one character $\chi$ of $C_{\tilde{S}_{2}}(s(y))$
by 
\[
\chi(c_{1}\oplus\dots\oplus c_{r})=\prod_{i=1}^{r}\chi_{a_{i}}(\det(c_{i})),
\]
for any $c=c_{1}\oplus\dots\oplus c_{r}\in C_{\tilde{S}_{2}}(s(y))$.
We will now show that $\chi$ agrees with $\psi_{y+\tilde{Z}}$ on
the intersection $C_{\tilde{S}_{2}}(s(y))\cap\tilde{S}^{1}$.

For any $c=c_{1}\oplus\dots\oplus c_{r}\in C_{\M_{n}(\tilde{\cO}_{2})}(s(y))$
we have $\bar{c}=\bar{c}_{1}\oplus\dots\oplus\bar{c}_{r}\in C_{\M_{n}(k)}(y)$,
where $\bar{c}=\rho(c)$ and $\bar{c}_{i}=\rho(c_{i})$. From the
explicit description of $C_{\M_{n}(\tilde{\cO}_{2})}(W(a_{i}))$ in
Lemma~\ref{lem:Weyr-centraliser}, one sees by direct computation
with block matrices that for any $\bar{c}_{i}\in C_{\M_{n}(\tilde{\cO}_{2})}(W(a_{i}))$,
we have
\[
\Tr(W(a_{i})\bar{c}_{i})=a_{i}\Tr(\bar{c}_{i}).
\]
Now let $1+\varpi c\in C_{\tilde{S}_{2}}(s(y))\cap\tilde{S}^{1}$
where $c=c_{1}\oplus\dots\oplus c_{r}\in C_{\M_{n}(\tilde{\cO}_{2})}(s(y))$.
Then, 
\begin{align*}
\psi_{y+\tilde{Z}}(1+\varpi c) & =\psi(\Tr(y\bar{c}))=\psi\Big(\Tr\Big(\sum_{i=1}^{r}W(a_{i})\bar{c}_{i}\Big)\Big)\\
 & =\prod_{i=1}^{r}\psi(a_{i}\Tr(\bar{c}_{i}))=\prod_{i=1}^{r}\chi_{a_{i}}(\det(1+\varpi\bar{c}_{i})),
\end{align*}
and thus $\chi$ equals $\psi_{y+\tilde{Z}}$ on $C_{\tilde{S}_{2}}(s(y))\cap\tilde{S}^{1}$.
We can therefore glue $\chi$ and $\psi_{y+\tilde{Z}}$ to a degree
one character $\chi\psi_{y+\tilde{Z}}$ of $C_{\tilde{S}_{2}}(s(y))\tilde{S}^{1}$,
which is an extension of $\psi_{y+\tilde{Z}}$. 

It remains to show that $\chi\psi_{y+\tilde{Z}}$ is fixed by $w$.
Since $w\in C_{\tilde{S}_{2}}(y+\tilde{Z})=\Stab_{\tilde{S}_{2}}(\psi_{y+\tilde{Z}})$,
it is enough to show that $w$ fixes $\chi$. If $w=I$ there is nothing
to prove, so assume that $w\neq I$. Let $c=c_{1}\oplus\dots\oplus c_{r}\in C_{\tilde{S}_{2}}(s(y))$
and write $\det(c_{i})=\alpha_{i}(1+\varpi\hat{\beta}_{i})$, where
$\alpha_{i}\in s(k^{\times})$ and $\beta_{i}=\rho(\hat{\beta}_{i})\in k$
are uniquely determined by $c_{i}$. Then 
\[
1=\det(c)=\big(\prod_{i=1}^{r}\alpha_{i}\big)\big(1+\varpi\sum_{i=1}^{r}\hat{\beta}_{i}\big),
\]
implying that $\sum_{i=1}^{r}\beta_{i}=0$.

By Lemma~\ref{lem:V-complement-lifted}\,\ref{enu:wcw-ucu}, we
have 
\[
wcw^{-1}=uc_{\sigma^{-1}(1)}u^{-1}\oplus\dots\oplus c_{\sigma^{-1}(r)},
\]
for some $u\in\GL_{N}(\tilde{\cO}_{2})$, where $N$ is the size of
$c_{\sigma^{-1}(1)}$, and $\sigma\in\cS_{r}$ is a permutation such
that there is a non-zero $\lambda\in k$ such that for all $i$, we
have $a_{\sigma(i)}=a_{i}+\lambda I$. Thus
\begin{align*}
\chi(wcw^{-1}) & =\chi_{a_{1}}(\det(uc_{\sigma^{-1}(1)}u^{-1}))\prod_{i=2}^{r}\chi_{a_{i}}(\det(c_{\sigma^{-1}(i)}))=\prod_{i=1}^{r}\chi_{a_{i}}(\det(c_{\sigma^{-1}(i)}))\\
 & =\prod_{i=1}^{r}\psi(a_{i}\beta_{\sigma^{-1}(i)})=\psi\big(\sum_{i=1}^{r}a_{i}\beta_{\sigma^{-1}(i)}\big)=\psi\big(\sum_{i=1}^{r}(a_{\sigma^{-1}(i)}+\lambda I)\beta_{\sigma^{-1}(i)}\big)\\
 & =\psi\big(\sum_{i=1}^{r}a_{\sigma^{-1}(i)}\beta_{\sigma^{-1}(i)}\big)=\psi\big(\sum_{i=1}^{r}a_{i}\beta_{i}\big)=\prod_{i=1}^{r}\chi_{a_{i}}(\det(c_{i}))=\chi(c),
\end{align*}
where we have used that $\sum_{i=1}^{r}\beta_{\sigma^{-1}(i)}=\sum_{i=1}^{r}\beta_{i}=0$.

We have thus shown that $\chi\psi_{x+\tilde{Z}}$ is fixed by $w$,
hence it is fixed by the group $\langle w\rangle$. By the reduction
steps given above, this finishes the proof.
\end{proof}
We can now deduce our main result.
\begin{thm}
\label{thm:Main}For all integers $n\geq1$ and $d\geq1$, we have
\[
\#\Irr_{d}(\SL_{n}(\F_{q}[t]/t^{2}))=\#\Irr_{d}(\SL_{n}(W_{2}(\F_{q}))).
\]
\end{thm}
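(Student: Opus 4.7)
The plan is to apply Clifford theory with respect to the abelian normal subgroup $S^{1}\triangleleft S_{2}$, and to observe that the resulting count of irreducible characters of a given dimension depends only on data defined entirely over $\F_{q}$. Since the case $p\nmid n$ is already known from \cite{Pooja-classicalgrps,Stasinski-Vera-Gajardo}, I may assume $p\mid n$. Fix $\cO_{2}$ to be either $\F_{q}[t]/t^{2}$ or $W_{2}(\F_{q})$, and write $S_{2}=\SL_{n}(\cO_{2})$ as in Section~\ref{sec:Notational-preliminaries}.

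The necessary inputs are already in place. By Section~\ref{sec:Notational-preliminaries}, each $\psi\in\Irr(S^{1})$ has the form $\psi=\psi_{x+Z}$ for a unique $x+Z\in\M_{n}(\F_{q})/Z$; the $S_{2}$-action on $\Irr(S^{1})$ factors through the adjoint action of $S$ on $\M_{n}(\F_{q})/Z$; and $\Stab_{S_{2}}(\psi_{x+Z})=T:=C_{S_{2}}(x+Z)$, with $T/S^{1}\cong C_{S}(x+Z)$. By Proposition~\ref{prop:extension-exists}, $\psi_{x+Z}$ extends to a linear character $\hat{\psi}$ of $T$. I would then invoke Gallagher's theorem together with Clifford induction (see \cite[Chapters~6 and~11]{Isaacs}) to conclude that $\rho\mapsto\mathrm{Ind}_{T}^{S_{2}}(\hat{\psi}\otimes\rho)$ is a bijection from $\Irr(C_{S}(x+Z))$ onto the set of irreducible representations of $S_{2}$ lying above $\psi_{x+Z}$, with
\[
\dim\mathrm{Ind}_{T}^{S_{2}}(\hat{\psi}\otimes\rho)=[S:C_{S}(x+Z)]\cdot\dim\rho.
\]
Summing over a set $\mathcal{R}$ of representatives for the $S$-orbits on $\M_{n}(\F_{q})/Z$, this yields
\[
\#\Irr_{d}(S_{2})=\sum_{x+Z\in\mathcal{R}}\#\{\rho\in\Irr(C_{S}(x+Z)):[S:C_{S}(x+Z)]\cdot\dim\rho=d\}.
\]
The right-hand side is expressed entirely in terms of $\SL_{n}(\F_{q})$ and its adjoint action on $\M_{n}(\F_{q})/Z$, both of which are independent of the choice of $\cO_{2}\in\{\F_{q}[t]/t^{2},W_{2}(\F_{q})\}$. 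Hence both groups yield the same count of irreducible representations in each dimension.

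The hard part of this programme is of course Proposition~\ref{prop:extension-exists}, whose proof rests on the surjectivity of reduction on centralisers (Theorem~\ref{thm:Thm1}) and the explicit description of $C_{S}(x+Z)/C_{S}(x)$ together with its lift to $S_{2}$ (Lemmas~\ref{lem:V-complement} and~\ref{lem:V-complement-lifted}). Given that proposition, the deduction of Theorem~\ref{thm:Main} carried out above is a routine Clifford-theoretic bookkeeping argument and introduces no new obstacle.
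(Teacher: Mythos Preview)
Your proposal is correct and follows essentially the same route as the paper: invoke Proposition~\ref{prop:extension-exists} to extend $\psi_{x+Z}$ to its stabiliser, then apply Gallagher/Clifford (the paper cites \cite[6.11, 6.17]{Isaacs}) to identify $\Irr(S_{2}\mid\psi_{x+Z})$ with $\Irr(C_{S}(x+Z))$ together with the dimension formula, and observe that the resulting count is expressed purely in terms of $\F_{q}$-data. The only cosmetic difference is that you single out the case $p\mid n$, whereas the paper's argument applies uniformly to all $n$; your explicit summation over orbit representatives is a welcome clarification but not a departure in method.
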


\begin{proof}
By Proposition~\ref{prop:extension-exists} the character $\psi_{x+Z}$
extends to $C_{S_{2}}(x+Z)$, for every $x\in\M_{n}(\F_{q})$. Thus,
by well known results in Clifford theory \cite[6.11, 6.17]{Isaacs},
there is a bijection
\begin{align*}
\Irr(C_{S_{2}}(x+Z)/S^{1}) & \longrightarrow\Irr(S_{2}\mid\psi_{x+Z})\\
\theta & \longmapsto\pi(\theta):=\Ind_{C_{S_{2}}(x+Z)}^{S_{2}}(\theta\tilde{\psi}_{x+Z}).
\end{align*}
Thus $\#\Irr(S_{2}\mid\psi_{x+Z})=\#\Irr(C_{S_{2}}(x+Z)/S^{1})=\#\Irr(C_{S}(x+Z))$
and $\dim\pi(\theta)=[S_{2}:C_{S_{2}}(x+Z)]\cdot\dim\theta=[S:C_{S}(x+Z)]\cdot\dim\theta$,
so $\#\Irr_{d}(S_{2})$ only depends on $|S|$ and $|C_{S}(x+Z)|$,
where $x+Z\in\M_{n}(\F_{q})/Z$ runs through a set of representatives
of the orbits under the conjugation action by $S$.
\end{proof}
\begin{rem}
In the proof of Proposition~\ref{prop:extension-exists}, the construction
of $\chi$ and the argument for showing that it is fixed by $w$ is
due to Singla in \cite[Lemma~2.3]{Pooja-classicalgrps} in the case
where $w$ is a permutation matrix. As noted before Lemma~\ref{lem:V-complement-lifted},
the latter is not always the case.
\end{rem}

\section{\label{sec:The-isomorphism-problem}The isomorphism problem}

In this final section we prove that for $p\geq5$ the groups $\SL_{n}(W_{2}(\F_{q}))$
and $\SL_{n}(\F_{q}[t]/t^{2})$ are never isomorphic. As is easy to
see (and as we will show below), this follows if we can show that
the group extension
\begin{equation}
1\longrightarrow K\longrightarrow\SL_{n}(W_{2}(\F_{q}))\longrightarrow\SL_{n}(\F_{q})\longrightarrow1,\label{eq:group-extension}
\end{equation}
where $K$ denotes the kernel of $\rho:\SL_{n}(W_{2}(\F_{q}))\rightarrow\SL_{n}(\F_{q})$,
is not split. This non-splitting holds if and only if $(n,p)\not\in\{(2,3),(3,2)\}$
(see Theorem~\ref{thm:non-splitting} below). For $q=p$ and $p\geq5$,
this follows from a result of Serre \cite[Ch.~IV, 3.4, Lemma~3]{Serre-Abelian-l-adic}
(the proof is given for $\SL_{2}$ but generalises to $\SL_{n}$ for
$n>2$). For $q=p$ it was proved by Sah \cite[Theorem~7]{Sah-I}
and for general $q$ and $\GL_{n}$, an argument was sketched in \cite[Proposition~0.3]{Sah-II}.
In the case $p\geq5$, we give a new simple proof. We note that \cite[Theorem~1.2]{Kondratiev-Zalesskii}
is misstated, as in fact $\SL_{2}(\Z/4)\not\cong\SL_{2}(\F_{2}[t]/t^{2})$
(see Remark~\ref{rem:Theorem-also-holds}).

A finite group $G$ is said to split over a normal subgroup $N$ if
the exact sequence $1\rightarrow N\rightarrow G\rightarrow G/N\rightarrow1$
splits, that is, if there exists a subgroup $Q$ of $G$ such that
$G=QN$ and $Q\cap N=1$.

Let $L$ denote the kernel of $\rho:\SL_{n}(\F_{q}[t]/t^{2})\rightarrow\SL_{n}(\F_{q})$.
Then $K\cong L\cong\sl_{n}(\F_{q})$, the additive group of the ring
of trace zero matrices over $\F_{q}$. The first thing to note is
that $\SL_{n}(\F_{q}[t]/t^{2})$ is split over the kernel of $L$
because the homomorphism $\SL_{n}(\F_{q})\rightarrow\SL_{n}(\F_{q}[t]/t^{2})$
induced by the ring homomorphism $\F_{q}\rightarrow\F_{q}[t]/t^{2}$,
$x\mapsto x+yt$ is a section of $\rho$. 

Moreover, $K$ is the maximal normal $p$-subgroup of $\SL_{n}(W_{2}(\F_{q}))$
and $L$ is the maximal normal $p$-subgroup of $\SL_{n}(\F_{q}[t]/t^{2})$,
so for any isomorphism 
\[
\alpha:\SL_{n}(W_{2}(\F_{q}))\rightarrow\SL_{n}(\F_{q}[t]/t^{2}),
\]
we have $\alpha(K)=L$, and thus, if $\SL_{n}(W_{2}(\F_{q}))\cong\SL_{n}(\F_{q}[t]/t^{2})$,
then (\ref{eq:group-extension}) must split (indeed, since $\SL_{n}(\F_{q}[t]/t^{2})=LQ$,
for some $Q$ such that $Q\cap L=1$, we get $\SL_{n}(W_{2}(\F_{q}))=K\alpha^{-1}(Q)$).
Therefore, to prove that $\SL_{n}(W_{2}(\F_{q}))$ and $\SL_{n}(\F_{q}[t]/t^{2})$
are not isomorphic, it suffices to prove that $\SL_{n}(W_{2}(\F_{q}))$
does not split over $K$. In this regard, the following result of
Gasch\"utz \cite{Gaschutz} will be useful:
\begin{lem}
\label{lem:Gashutz}Let $G$ be a finite group and $A$ an abelian
normal $p$-subgroup of $G$. Then $G$ splits over $A$ if and only
if a Sylow $p$-subgroup $P$ of $G$ splits over $A$.
\end{lem}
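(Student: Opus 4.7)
The plan is to handle the two directions of the equivalence separately. The forward direction is largely formal; the reverse direction is the content of the lemma and rests on a coprime-index argument between $P$ and $G$.

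For the forward implication, I would first observe that, being a normal $p$-subgroup, $A$ is contained in every Sylow $p$-subgroup of $G$; in particular $A \leq P$. Given a splitting $G = A \rtimes Q$, I would set $H = P \cap Q$. Any $x \in P$ can be written as $x = aq$ with $a \in A \leq P$ and $q \in Q$, whence $q = a^{-1}x \in P$, so $q \in H$. Thus $P = AH$ and $A \cap H \leq A \cap Q = 1$, giving $P = A \rtimes H$.

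For the reverse implication, I would argue cohomologically. Writing $\bar G = G/A$ and $\bar P = P/A$, the extension $1 \to A \to G \to \bar G \to 1$, together with the conjugation action of $\bar G$ on the abelian group $A$, determines a class $\omega \in H^2(\bar G, A)$ which vanishes if and only if $G$ splits over $A$. The hypothesis on $P$ translates exactly to $\mathrm{res}^{\bar G}_{\bar P}(\omega) = 0$ in $H^2(\bar P, A)$. The standard identity $\mathrm{cor}^{\bar G}_{\bar P} \circ \mathrm{res}^{\bar G}_{\bar P} = [\bar G : \bar P] \cdot \mathrm{id}$ shows that $\omega$ is killed by $[\bar G : \bar P] = [G : P]$. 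But $H^2(\bar G, A)$ is annihilated by $|A|$ (since $A$ is), hence is a $p$-group, while $[G:P]$ is coprime to $p$; so multiplication by $[G:P]$ acts as an automorphism of $H^2(\bar G, A)$, forcing $\omega = 0$.

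The main obstacle is of course the reverse direction; I would keep in reserve, as a self-contained alternative, Gasch\"utz's original averaging proof. There one picks a set-theoretic section $t: \bar G \to G$ with $t(1) = 1$, considers the $2$-cocycle $f(x,y) = t(x) t(y) t(xy)^{-1} \in A$, and modifies $t$ by a $1$-cochain obtained from a map $\phi: \bar P \to A$ trivializing $f|_{\bar P \times \bar P}$ (which exists by hypothesis), averaged over a transversal of $\bar P$ in $\bar G$. The averaging is legitimate because $[\bar G : \bar P]$, being coprime to $p$, acts invertibly on the $p$-group $A$. For the exposition in the paper I would favor the cohomological route for brevity, treating the restriction/corestriction identity and the annihilation of $H^2$ by $|A|$ as standard facts.
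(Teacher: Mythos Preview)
Your argument is correct in both directions. The forward implication via intersecting the complement with $P$ is routine, and for the reverse implication the restriction--corestriction identity combined with the observation that $H^2(\bar G,A)$ is a $p$-group (being annihilated by $|A|$) is the standard cohomological proof; the averaging alternative you sketch is essentially Gasch\"utz's original argument.

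That said, there is nothing in the paper to compare against: the lemma is stated with attribution to Gasch\"utz and a citation, and no proof is given. Your write-up supplies what the paper deliberately omits as a known result.
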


Let $P$ be the pre-image under $\rho$ of the upper uni-triangular
subgroup $U_{1}$ in $\SL_{n}(\F_{q})$; then $P$ is a Sylow $p$-subgroup
of $\SL_{n}(W_{2}(\F_{q}))$.

For any $1\leq i,j\leq n$, $i\neq j$, let $e_{ij}=e_{ij}(1)$ denote
the $(i,j)$-elementary unipotent matrix in $\SL_{n}(\F_{q})$ and
let $E_{ij}=E_{ij}(1)$ denote the $(i,j)$-elementary nilpotent matrix
in the Lie algebra $\sl_{n}(\F_{q})$. Note that $e_{ij}=I+E_{ij}$.
We will also consider $e_{ij}$ and $E_{ij}$ as elements in $\SL_{n}(W_{2}(\F_{q}))$
and $\sl_{n}(W_{2}(\F_{q}))$, respectively.

The point behind the following result is to consider possible lifts
of $e_{12}$ of order $p$. We learnt this idea from Y.~de~Cornulier. 
\begin{thm}
\label{thm:non-splitting}Assume that $p\geq5$. Then the element
$e_{12}\in\SL_{n}(\F_{q})$ has no lift to an element in $\SL_{n}(W_{2}(\F_{q}))$
of order $p$. Thus $\SL_{n}(W_{2}(\F_{q}))$ does not split over
$K$.
\end{thm}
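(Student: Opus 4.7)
The plan is to compute $g^p$ directly for an arbitrary lift $g \in \SL_n(W_2(\F_q))$ of $e_{12}$ and show that $g^p \neq I$ when $p \geq 5$. The ``thus'' statement then follows because any splitting of the extension~(\ref{eq:group-extension}) would provide a subgroup $Q \subseteq \SL_n(W_2(\F_q))$ mapping isomorphically onto $\SL_n(\F_q)$ under $\rho$; the preimage of $e_{12}$ in $Q$ would be a lift of $e_{12}$ of order $p$, contradicting the first part.

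To set up the computation, write $g = I + E_{12} + pY$ with $Y \in \M_n(W_2(\F_q))$. Because $p^2 = 0$ in $W_2(\F_q)$, the matrix $pY$ depends only on $\bar Y := \rho(Y) \in \M_n(\F_q)$, and the determinant condition $\det(g)=1$ imposes only a linear constraint on $\Tr(\bar Y)$ that will play no further role. Expanding $(I + E_{12} + pY)^p$ as a noncommutative multinomial and using $p^2 = 0$ to discard all monomials containing two or more factors of $pY$, we obtain
\[
g^p \;=\; (I + E_{12})^p \;+\; p \sum_{k=0}^{p-1} (I+E_{12})^k \,\bar Y\,(I+E_{12})^{p-1-k}.
\]

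Since $E_{12}^2 = 0$, one has $(I + E_{12})^k = I + kE_{12}$ for every integer $k$; in particular $(I + E_{12})^p = I + pE_{12}$, whose $(1,2)$-entry is the nonzero element $p \in W_2(\F_q)$. Expanding $(I + kE_{12})\bar Y (I + (p-1-k)E_{12})$ and collecting, the correction becomes $p$ times a linear combination of the four matrices $\bar Y$, $E_{12}\bar Y$, $\bar Y E_{12}$, $E_{12}\bar Y E_{12}$, with respective integer coefficients $p$, $p(p-1)/2$, $p(p-1)/2$, and $\sum_{k=0}^{p-1}k(p-1-k) = \binom{p}{3}$. For odd $p$ the first three coefficients are divisible by $p$, and $\binom{p}{3}$ is divisible by $p$ precisely when $p \geq 5$. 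Hence for $p\geq 5$ the correction vanishes modulo $p$, so $g^p = I + pE_{12} \neq I$.

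The main obstacle is purely arithmetic: making all four coefficient sums vanish simultaneously. The identity $\sum_{k=0}^{p-1}k(p-1-k) = \binom{p}{3}$, together with the divisibility $p \mid \binom{p}{3}$ for $p \geq 5$, is precisely where the hypothesis is used. At $p = 3$ one has $\binom{3}{3}=1$, so the computation fails there, in agreement with Sah's exceptional isomorphism at $(n,p)=(2,3)$.
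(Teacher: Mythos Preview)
Your proof is correct and follows essentially the same approach as the paper: write an arbitrary lift as $(I+E_{12})+pY$, expand the $p$-th power using $p^2=0$ and $(I+E_{12})^k=I+kE_{12}$, and check that all correction coefficients are divisible by $p$ when $p\geq 5$, leaving $g^p=I+pE_{12}\neq I$. The only cosmetic difference is that the paper reorganises the sum via the commutator $[A,X]$ before evaluating the coefficients, whereas you expand directly into the four terms $\bar Y$, $E_{12}\bar Y$, $\bar Y E_{12}$, $E_{12}\bar Y E_{12}$; the arithmetic content (in particular the role of $\binom{p}{3}$ at $p\geq 5$) is the same.
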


\begin{proof}
For simplicity, write $A=e_{12}\in\SL_{n}(W_{2}(\F_{q}))$. Any lift
in $\SL_{n}(W_{2}(\F_{q}))$ of $e_{12}\in\SL_{n}(\F_{q})$ is of
the form
\[
A+pX,
\]
for some $X\in\M_{n}(W_{2}(\F_{q}))$ (note that $p$ is a generator
of the maximal ideal of $W_{2}(\F_{q})$ and that $pX$ only depends
on the image of $X$ in $\M_{n}(\F_{q})$). Since $p^{2}=0$ in $W_{2}(\F_{q})$,
and $[A^{m},X]=m[E_{12},X]=m[A,X]$ for each $m\in\N$, we have

\begin{align*}
(A+pX)^{p} & =A^{p}+\sum_{i=0}^{p-1}A^{i}pXA^{p-1-i}=A^{p}+\sum_{i=0}^{p-1}p(A^{p-1}X-A^{i}[A^{p-1-i},X])\\
 & =A^{p}-p\sum_{i=0}^{p-1}A^{i}[A^{p-1-i},X]=A^{p}-p\sum_{i=0}^{p-1}A^{i}(p-1-i)[A,X]\\
 & =A^{p}+p\Big(\sum_{i=0}^{p-1}A^{i}+iA^{i}\Big)[A,X]\\
 & =A^{p}+p\Big(\sum_{i=0}^{p-1}(1+i)(I+iE_{12})\Big)[A,X]\\
 & =A^{p}+p\Big(\frac{p(p+1)}{2}I+\frac{p(p-1)(p+1)}{3}E_{12}\Big)[A,X].
\end{align*}
Thus, since $p\geq5$, we thus conclude that $(A+pX)^{p}=A^{p}=I+pE_{12}\neq I$,
which proves the first assertion. Now, if $\SL_{n}(W_{2}(\F_{q}))$
splits over $K$, then we would have an injective homomorphism $\SL_{n}(\F_{q})\rightarrow\SL_{n}(W_{2}(\F_{q}))$
such that $e_{12}\in\SL_{n}(\F_{q})$ maps to a lift of $e_{12}$
of order $p$. Since this cannot be the case, $\SL_{n}(W_{2}(\F_{q}))$
does not split over $K$.
\end{proof}

\begin{rem}
\label{rem:Theorem-also-holds}Theorem~(\ref{thm:non-splitting})
also holds for $(n,p)=(2,2)$. Indeed, in this case the proof of the
above theorem shows that $(A+pX)^{2}=A^{2}+2[A,X]=I$ implies that
$X\equiv\begin{pmatrix}x & y\\
0 & x+1
\end{pmatrix}\mod2$, for some $x,y\in\F_{q}$. Then $\det(A+2X)=3\neq1$ in $W_{2}(\F_{q})$,
so there is no lift $A+pX$ in $\SL_{n}(W_{2}(\F_{q}))$ of order
$2$.

On the other hand, for $(n,p)=(3,2)$ there \emph{is} a lift of $e_{12}$
of order $2$, namely
\[
I+E_{12}+2\begin{pmatrix}1 & 0 & 0\\
0 & 0 & 0\\
0 & 0 & -1
\end{pmatrix}.
\]
The square of this element is $I$ mod $4$. Note that this element
has determinant $1$ mod $4$. Similarly, for $n=p=3$ the element
\[
I+E_{12}+E_{23}+3\begin{pmatrix}0 & 0 & 0\\
-1 & -1 & 0\\
0 & 0 & 0
\end{pmatrix}.
\]
has determinant $1$ and cube equal to $I$. It can also be seen that
for $p=3$ this example can be adapted to a lift of $e_{12}$ of order
$3$ for any $n\geq3$. However, in general, for $p=2,3$ a different
approach than the proof of Theorem~(\ref{thm:non-splitting}) is
needed to establish the splitting/non-splitting of $\SL_{n}(W_{2}(\F_{q}))$
(see \cite[Theorem~7]{Sah-I}).
\end{rem}

\bibliographystyle{alex}
\bibliography{alex}

\end{document}